\numberwithin{equation}{section}
\numberwithin{figure}{section}
\newtheorem{thm}{Theorem}[section]
\newtheorem{athm}{Theorem}
\newtheorem{lem}[thm]{Lemma}
\newtheorem{prop}[thm]{Proposition}
\newtheorem{prob}[thm]{Problem}
\newtheorem{obs}[thm]{Observation}
\newtheorem*{thm*}{Theorem}
\newtheorem*{conj*}{Conjecture}
\newtheorem*{cor*}{Corollary}
\newtheorem*{ques*}{Question}
\newtheorem*{claim*}{Claim}
\theoremstyle{definition}
\newtheorem{rem}[thm]{Remark}
\newtheorem{defn}[thm]{Definition}
\newtheorem{ex}[thm]{Example}
\newtheorem*{rem*}{Remark}
\newcommand{\cB}{\mathcal{B}}
\newcommand{\cE}{\mathcal{E}}
\newcommand{\cG}{\mathcal{G}}
\newcommand{\cH}{\mathcal{H}}
\newcommand{\cK}{\mathcal{K}}
\newcommand{\cO}{\mathcal{O}}
\newcommand{\bN}{\mathbb{N}}
\newcommand{\bR}{\mathbb{R}}
\newcommand{\bZ}{\mathbb{Z}}
\newcommand{\incl}[3][right]%
{%
\draw[<-,>=#1 hook] #2 to ($ #2!0.5!#3 $);
\draw[->] ($ #2!0.5!#3 $) to #3;%
}
\newcommand{\inclusion}[5][right]%
{%
\draw[<-,>=#1 hook] #4 to ($ #4!0.5!#5 $) node[#2,font=\small]{#3};
\draw[->,>=stealth'] ($ #4!0.5!#5 $) to #5;%
}
\renewcommand{\geq}{\geqslant}
\renewcommand{\leq}{\leqslant}
\title{Ample groupoids that are neither almost finite nor purely infinite}
\author{Xiaolei Wu}
\address{Shanghai Center for Mathematical Sciences, Jiangwan Campus, Fudan University, No.2005 Songhu Road, Shanghai, 200438, P.R. China}
\email{xiaoleiwu@fudan.edu.cn}
\author{Mengfei Zhao}
\address{Shanghai Center for Mathematical Sciences, Jiangwan Campus, Fudan University, No.2005 Songhu Road, Shanghai, 200438, P.R. China}
\email{mfzhao22@m.fudan.edu.cn}
\author{Xin Ma}
\address{Institute for Advanced Study in Mathematics, Harbin Institute of Technology, Harbin, 150001, P.R. China}
\email{xma17@hit.edu.cn}
\subjclass[2020]{22A22, 46L55}
\keywords{Twisted topological groupoid, almost finite, purely infinite}
\date{March 2026}
\begin{document}

\begin{abstract}
We investigate a question posed by Matui concerning the existence of minimal ample groupoids that are neither almost finite nor purely infinite, along with several natural variations of this problem. We begin by noting that there exists minimal, effective, ample transformation groupoids with this property. Moreover, such examples can be chosen to be either principal or amenable. We then construct new examples of essentially principal ample groupoids that are likewise neither almost finite nor purely infinite. These examples rely on the recent twisted topological groupoid construction of Palmer and Wu. In particular, our examples do not arise from transformation groupoids.
\end{abstract}
\maketitle

\section*{Introduction}

The notion of topological groupoids goes back to Ehresmann \cite{Ehresmann59}. It has  applications in many areas of mathematics, such as differential geometry \cite{Mackenzie87}, geometric group theory \cite{Haefliger91} and $C^\ast$-algebra \cite{Renault80}. More recently, Matui has generalized the notion of topological full group of Giordano--Putnam--Skau \cite{GiordanoPutnamSkau99} to the setting of effective \'etale groupoid \cite{Matui2012}. Since then, much work has been done in studying such groupoids and their topological full groups. A highlight of this line of research is \cite{JuschenkoMonod13} in which Juschenko and Monod found the first examples of finitely generated simple amenable groups.  

Among all classes of minimal ample groupoids, two families  have been introduced and extensively studied: \emph{almost finite groupoids}~\cite{Matui2012,Matui2015b,Matui2016} and \emph{purely infinite groupoids}~\cite{Matui2015b,Matui2016}. Note that these two notions have been extended to non-ample groupoids, respectively, in \cite{MW20} and \cite{Ma22}.
Both classes exhibit deep relationships with group theory, $C^*$-algebras, and dynamical systems; see~\cite{Matui15} and the references therein for further background. 
In particular, these notions play a central role in the structure and classification theory of groupoid $C^*$-algebras, as demonstrated in~\cite{MW20,Ma22}. 
Furthermore, it has been shown that for a broad class of unital separable simple nuclear $C^*$-algebras satisfying the UCT, including all strongly self-absorbing $C^*$-algebras satisfying the UCT, their classifiability by the Elliott invariant can be characterized by the almost finiteness or the pure infiniteness of their certain associated groupoid models in \cite{MW24} via a uniform notion, called \textit{almost elementariness} introduced in \cite{MW20}.

On the other hand, there is a fundamental difference between these groupoids: an almost finite groupoid admits approximations of every compact subset by compact elementary groupoids, whereas a purely infinite groupoid possesses an abundance of compact open bisections. This contrast is also reflected in their topological full groups, such as finiteness properties \cite{Matui2006,Matui2015b,Matui15} and amenability \cite{Matui2015b,JuschenkoMonod13}.  Despite these differences, the two classes share several important features. For example, the commutator subgroup of their topological full groups are  simple \cite{Matui15}; both admit comparison and satisfy the AH-conjecture \cite{Li2024}. Moreover, it seems that most known examples of effective minimal ample groupoids belong to one of these two classes. This leads Matui to ask the following question:

\begin{prob}\label{matui-prob} 
    Does there exist an effective minimal (amenable) ample groupoid which is neither almost finite nor purely infinite?
\end{prob}

The question appeared on Matui's lecture notes \cite[Problem 3.6]{Matui25} (without the amenability condition) and \cite[Problem 3.8]{Matui25sh}. As we will see in \S \ref{sec:tranf}, there are many such examples already coming from transformation groupoids. Such examples of transformation groupoids can be even chosen to be  minimal, principal (but not amenable) and satisfies dynamical comparison. See Example \ref{ex:odometer} below.  These minimal principal examples are known as odometers in the community of dynamical systems. But to the best knowledge of authors, they have not been used as counterexamples to Problem \ref{matui-prob}.

In addition, Matui's original definition of \emph{essential principality} for groupoids, introduced for instance in~\cite{Renault80} and~\cite{Matui15}, coincides with the notion of \emph{effectivity} for groupoids. 
In this work, however, we adopt a definition of essential principality that is more closely aligned with the concept of \emph{essential freeness} in ergodic theory (see Definition~\ref{defn:free} below). 
This choice is motivated by the fact that essential freeness is known to be a necessary condition for almost finiteness (see, e.g.,~\cite[Lemma~2.2]{Joseph24}). 
In view of this, minimal topologically free but non‑essentially free actions of amenable groups yield amenable counterexamples to Problem~\ref{matui-prob} (see~\cite{Joseph24,HirshbergWu24}). 
It is therefore natural and more motivated to study Matui’s question in the framework of essential principal groupoids. 

On the other hand, it is known that there are obstructions for important $C^*$-algebras, e.g., Jiang-Su algebra $\mathcal{Z}$, to be written as transformation groupoid $C^*$-algebras (see, e.g., \cite[Theorem C]{MW24}). However, it follows from \cite[Theorem A]{MW24} that they can be always written  as almost elementary groupoid $C^*$-algebras. Recall that the notion of almost elementariness is equivalent to either the almost finiteness or the pure infiniteness in the ample case, depending on the existence of invariant probability measures on the unit space (see \cite{MW20,Ma22,MW24}). Hence, from $C^*$-algebraic perspective, it is worthwhile to further explore Matui’s question in the context of non‑transformation groupoids.

Taking these aspects into account, we restate Matui’s question in the following form:

\begin{prob}\label{main-prob} 
    Does there exist an essentially principal minimal (amenable) ample groupoid which is not a transformation groupoid and is neither almost finite nor purely infinite?
\end{prob}

We will recall the corresponding notions in \S 1. If it turned out that there were no examples of such groupoids, there would be a dichotomy among all essentially principal minimal (amenable) ample groupoids that are not transformation groupoids.   Our main theorem provides many such groupoids that do not arise from the transformation groupoid construction.



\begin{athm}\label{thm:maina}
    There exists effective minimal ample  groupoids that do not arise from tranformation groupoids and are neither almost finite nor purely infinite. Some can be even made to be essentially principal.
\end{athm}

Our example uses the recent twisted topological groupoid construction of Palmer and Wu \cite[Remark 2.15]{PalmerWu25}, which in turn was inspired by Belk and Zaremsky's construction of twisted Brin--Thompson groups \cite{BelkZaremsky22}. We establish the fundamental properties of Palmer--Wu's construction in \S \ref{section:basi-prop}, which may be of independent interest. A key new feature of our examples is that they are not transformation groupoids by definition. Indeed, they have the form of $\cG\rtimes \Gamma$, in which $\cG$ is not a trivial groupoid, i.e.,  $\cG\neq \cG^{(0)}$. To be more specific,  We prove Theorem \ref{thm:maina} by establishing the following.

\begin{athm}\label{main:thmb}
       Let $\Gamma$ be a countable non-amenable group, $\cG$ a minimal principal almost finite effective groupoid.  Then the twisted topological groupoid $\Gamma\cG\rtimes \Gamma$ is a minimal effective ample groupoid that is neither almost finite nor purely infinite, where $\Gamma\cG$ is not a trivial groupoid in the sense that $\Gamma\cG\neq(\Gamma\cG)^{(0)}$. Furthermore, if $\cG$ is essentially principal and $\cG$ has unique $\cG$-invariant probability measure, then $\Gamma\cG\rtimes \Gamma$ is also essentially principal.
\end{athm}

\begin{rem}\label{rem:tw-trf-iso}
  See Examples \ref{ex:non a.f. and non p.i.}, \ref{ex: TG non transformation} and  for some concrete examples of such groupoids.  Although our examples are not transformation groupoids by definition, they might still be secretly  isomorphic to some transformation groupoids; see Remark \ref{rmk: final remark} for a discussion.
\end{rem}

Further properties of twisted topological groupoids and their  full groups will be studied in \cite{WuZhao25+}.

\subsection*{Acknowledgments.}

Wu is currently a member of LMNS and is supported by NSFC No.12326601. He thanks Martin Palmer for discussions on twisted topological groupoids. Ma is supported by NSFC No. 12571133.  We thank Hiroki Matui for a wonderful lecture series at Fudan University from which we learned Problem \ref{matui-prob}  (without the amenability
condition). We also thank Jianchao Wu for some stimulating discussions. We further thank him and Kang Li for pointing out Joseph's example \cite{Joseph24}. Finally, we would like to thank the anonymous referee for the very helpful comments and insightful questions regarding whether a twisted groupoid in this paper could be isomorphic to a transformation groupoid. These remarks inspired  Remark~\ref{rmk: final remark}.

\section{ topological groupoids}

In this section, we recall the basics of topological groupoids. See \cite{Matui15,Renault80} for more information. 

A groupoid is a small category whose morphisms are all invertible. As usual, we identify the groupoid with its set of morphisms, denoted by $\cG$, and view its set of objects (also called units) $\cG^{(0)}$ as a subset of $\cG$ by identifying objects with the corresponding identity morphisms. By definition, a groupoid $\cG$ comes equipped with range and source maps $r \colon \cG \to \cG^{(0)}$, $s \colon \cG \to \cG^{(0)}$, a multiplication map
\[
\cG \tensor[_s]{\times}{_r} \cG = \{ (g_1,g_2)\mid s(g_1) = r(g_2)\} \longrightarrow \cG, \qquad (g_1,g_2) \longmapsto g_1g_2
\]
and an inversion map $\cG \to \cG \colon g \mapsto g^{-1}  $ satisfying $r(g^{-1}) = s(g), s(g^{-1}) =r(g), gg^{-1} = r(g)$ and $g^{-1} g = s(g)$. These structure maps must satisfy the usual list of axioms so that $\cG$ is a small category; see \cite[\S 1.1]{Renault80} for more details.

\begin{defn}
A \emph{topological groupoid} is a groupoid equipped with a topology making the composition and inversion maps continuous. In addition, throughout the paper, we will always assume that the topology on $\cG$ makes $\cG$ and its subspace $\cG^{(0)}$ (which we call the \emph{unit space}) locally compact and Hausdorff.
\end{defn}

\begin{defn}
A topological groupoid $\cG$ is called \emph{étale} if it is a second countable locally compact Hausdorff groupoid such that the source and range maps are local homeomorphisms.
\end{defn}

We note that the definition implies that the unit space $\cG^{(0)}$ of an étale groupoid is an open subspace of the whole space $\cG$.

\begin{defn}
An open subspace $U \subseteq \cG$ is called an \emph{open bisection} if the restricted range and source maps $r|_U \colon U\to r(U)$ and $s|_U \colon U\to s(U)$ are homeomorphisms.
\end{defn}

If the groupoid $\cG$ is \'etale, then $\cG$ has a basis for its topology consisting of open bisections. We note that open bisections are always locally compact and Hausdorff because they are homeomorphic to open subspaces of the unit space. For $x\in \cG^{(0)}$, the set $r(\cG x) := \{r(gx) \mid g\in \cG\}$ is called the \emph{$\cG$-orbit} of $x$. 

\begin{defn}
Let $\cG$ be a topological groupoid, it is \emph{minimal} if  every $\cG$-orbit is dense in $\cG^{(0)}$.    
\end{defn}

\begin{defn}\label{defn:invariant measure}
      Let $\cG$ be an ample groupoid with compact unit space. A probability measure $\mu$ on $\cG^{(0)}$ is said to be \textit{$\cG$-invariant} if
 $\mu(r(U)) = \mu(s(U))$ holds for every open bisection $U$. We denote by $M(\cG)$ the set of all invariant probability measures of $\cG$.
\end{defn}

Recall that the \emph{isotropy bundle} of a groupoid $\cG$ is defined to be $\cG' := \{g\in \cG \mid r(g) =  s(g)\}$. And for a given point $x \in \cG^{(0)}$, its isotropy group $\cG^x_x$ is defined to be $\{g \in \cG \mid r(g)=s(g) =x\}$

\begin{defn}\label{defn:free}
An \'etale groupoid is called \emph{principal} if $\cG' =\cG^{(0)}$. When the interior of $\cG'$ is $\cG^{(0)}$, we say that $\cG$ is \emph{effective}. It is called  \emph{essentially principal} if for any $\cG$-invariant probability measure, the set of points with trivial isotropy group has full measure.
\end{defn}

\begin{rem}\label{rem: ess prin}
   We remark that the essential principality in Definition \ref{defn:free} coincides with the notion of \textit{essential freeness} for \'{e}tale groupoids defined in \cite[Definition 3.3]{LZ24}, which means that 
   \[\mu(\{x\in s(B): r(Bx)=\{x\}\})=0\]
   holds for any $\mu\in M(\cG)$ and any precompact open bisection $B\subset \cG\setminus \cG^{(0)}$. Indeed, denote by $\mathcal{B}$ the collection of all precompact open bisections contained in $\cG\setminus \cG^{(0)}$ for simplicity. Then \'{e}taleness of $\cG$ implies that 
   \[\{x\in \cG^{(0)}: \cG^x_x=\{x\}\}^c=\bigcup_{B\in \mathcal{B}}\{x\in s(B): r(Bx)=\{x\}\}.\]
   This establishes the equivalence between the essential principality and the essential freeness for $\cG$.
   \end{rem}

We record a basic fact about essential principality and effectiveness (i.e., Matui's original essential principality in ,e.g., \cite{Renault80} and~\cite{Matui15}) for minimal \'{e}tale groupoids as follows.

\begin{prop}
Let $\cG$ be a minimal \'{e}tale topological groupoid. Suppose $\cG$ is essential principal. Then $\cG$ is effective.
\end{prop}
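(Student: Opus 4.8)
The plan is to argue by contradiction: assume $\cG$ is minimal and essentially principal but \emph{not} effective. I would first produce a nonempty open subset of $\cG^{(0)}$ every point of which has nontrivial isotropy, then use minimality to upgrade this to \emph{every} point of $\cG^{(0)}$, and finally contradict essential principality by pairing this with any $\cG$-invariant probability measure.

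For the first step, recall that in a Hausdorff \'etale groupoid the unit space $\cG^{(0)}$ is open in $\cG$, and it is also closed (if $g_n\to g$ with $g_n\in\cG^{(0)}$, then $r(g)=\lim r(g_n)=\lim g_n=g$ and likewise $s(g)=g$). If $\cG$ is not effective, then $\operatorname{int}(\cG')\setminus\cG^{(0)}\neq\emptyset$, so, using that $\cG$ has a basis of open bisections, I would choose an open bisection $U$ with $\emptyset\neq U\subseteq\operatorname{int}(\cG')$; replacing $U$ by $U\setminus\cG^{(0)}$, which is again an open bisection since $\cG^{(0)}$ is closed, I may assume $U\cap\cG^{(0)}=\emptyset$. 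Since $U\subseteq\cG'$, every $u\in U$ satisfies $r(u)=s(u)$, so $V:=r(U)=s(U)$ is a nonempty open subset of $\cG^{(0)}$; and for each $x\in V$ there is $u\in U$ with $r(u)=x$, hence $u\in\cG^x_x$ with $u\neq x$ (as $u\notin\cG^{(0)}$). Thus every point of $V$ has nontrivial isotropy.

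Next I would set $W:=\{x\in\cG^{(0)}:\cG^x_x\neq\{x\}\}$ and note that $W$ is a union of $\cG$-orbits: for $g\in\cG$ the conjugation $h\mapsto ghg^{-1}$ is a group isomorphism $\cG^{s(g)}_{s(g)}\to\cG^{r(g)}_{r(g)}$, so $s(g)\in W$ if and only if $r(g)\in W$. Since $\emptyset\neq V\subseteq W$ and, by minimality, every $\cG$-orbit is dense and therefore meets the nonempty open set $V$, every $\cG$-orbit lies in $W$; that is, $W=\cG^{(0)}$. Finally, let $\mu$ be any $\cG$-invariant probability measure on $\cG^{(0)}$. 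The set of points with trivial isotropy is $\cG^{(0)}\setminus W=\emptyset$, which has $\mu$-measure $0$, not $1$ --- contradicting essential principality. Hence $\cG$ is effective.

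The delicate point --- the step I expect to be the real obstacle --- is the last one, as it relies on $\cG$ admitting at least one invariant probability measure; without this the statement is false (e.g.\ for the transformation groupoid of the product action of $\Lambda\times F$ on the Cantor set $X$, where $\Lambda$ acts freely and minimally with no invariant probability measure and the nontrivial finite group $F$ acts trivially). For the groupoids of interest here this is automatic, since they have compact unit space and are almost finite (or built from such), hence carry an invariant measure; alternatively one includes $M(\cG)\neq\emptyset$ among the hypotheses. The remaining ingredients --- the bisection bookkeeping of the second paragraph and the invariance of $W$ --- are routine consequences of the \'etale structure.
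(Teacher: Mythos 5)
Your argument is correct, and it reaches the conclusion by a genuinely different route than the paper. The shared step is the production of a nonempty open bisection $U\subseteq\operatorname{int}(\cG')\setminus\cG^{(0)}$ and the observation that every point of $V=s(U)$ has nontrivial isotropy (the paper does exactly this too; your only slip is cosmetic --- you should pick $U$ inside the nonempty open set $\operatorname{int}(\cG')\setminus\cG^{(0)}$ from the start, rather than first picking $U\subseteq\operatorname{int}(\cG')$ and then removing $\cG^{(0)}$, which could a priori leave nothing). After that the two proofs diverge: the paper fixes $\mu\in M(\cG)$, uses minimality \emph{and compactness of $\cG^{(0)}$} to cover the unit space by finitely many bisections with ranges inside a given open set, deduces $\mu(O)\geq 1/n$ for every nonempty open $O$, and combines this with $\mu(D)=1$ to get that the trivial-isotropy set $D$ is dense, contradicting $D\cap V=\emptyset$; you instead keep the argument topological for as long as possible, showing via conjugation-invariance of isotropy groups that the nontrivial-isotropy set is a union of orbits, so by minimality it is all of $\cG^{(0)}$, and only then invoke a measure to contradict essential principality. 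Your version buys independence from compactness of the unit space and avoids the positivity-of-measure lemma; the paper's version yields the (reusable) fact that invariant measures of minimal groupoids with compact unit space have full support. Finally, the caveat you flag is real but is not a defect of your proof relative to the paper's: the paper's proof also begins ``Let $\mu\in M(\cG)$'' and silently needs $M(\cG)\neq\emptyset$ (and compactness of $\cG^{(0)}$, neither of which is in the statement), and your $\Lambda\times F$ example shows the proposition literally fails when no invariant probability measure exists, since essential principality is then vacuous; so both arguments prove the statement only under the implicit hypothesis $M(\cG)\neq\emptyset$, which holds for all groupoids the paper actually applies it to.
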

\begin{proof}
Denote by $D=\{x\in \cG^{(0)}: \cG^x_x=\{x\} \}$ for simplicity. Let $\mu\in M(\cG)$ and $O$ is a non-empty open set in $\cG^{(0)}$. Since $\cG$ is minimal and $\cG^{(0)}$ is compact, there are finitely many open bisections $U_1, \dots, U_n$ such that $\bigcup_{i=1}^ns(U_i)=\cG^{(0)}$ and $\bigcup_{i=1}^nr(U_i)\subset O$. This implies that $\mu(O)\geq 1/n>0$. In addition, note that $\mu(D)=1$ because $\cG$ is essential principal. But this implies that $D\cap O\neq \emptyset$ necessarily, i.e., $D$ is dense in $\cG^{(0)}$. 

Now, suppose the interior $\cG'^o\neq \cG^{(0)}$. Since $\cG^{(0)}$ is closed, there exists a non-empty open bisection $U\subset \cG'^o-\cG^{(0)}$. Define $V=s(U)$, which is a non-empty open set in $\cG^{(0)}$. Then $D\cap V=\emptyset$ holds by definition,  which is a contradiction. Thus $\cG'^o=\cG^{(0)}$ holds necessarily and thus $\cG$ is effective.
\end{proof}

\begin{defn}
A topological groupoid $\cG$ is called \emph{ample} if it is étale and its unit space $\cG^{(0)}$ is totally disconnected.
\end{defn}

\begin{defn}
An ample groupoid $\cG$ is called \emph{purely infinite} if for all compact  open subspaces $U,V \subseteq \cG^{(0)}$ with $V \neq \emptyset$, there exists a compact open bisection $\sigma \subseteq \cG$ such that $s(\sigma) = U$ and $r(\sigma) \subseteq V$.
\end{defn}

\begin{ex}[{cf.\ \cite[Definition I.1.7]{Renault80}}]
\label{ex:semi-direct-product}
Let $\cG$ be a topological groupoid and let $\Gamma$ be a discrete group acting on $\cG$ from the left via a homomorphism $\tau \colon \Gamma \to \mathrm{Aut}(\cG)$. We can then form the semi-direct product $\cG \rtimes_\tau \Gamma$ on the product space $\cG \times \Gamma$ with the following groupoid structure: $(g, \gamma) $ and $ (g', \gamma')$ are composable if and only if $g$ and $\tau(\gamma)(g')$ are composable, in which case the multiplication is given by
\[
(g, \gamma) \cdot (g', \gamma') = (g \cdot \tau(\gamma)(g'),\gamma \gamma').
\]
Inversion is given by
\[
(g,\gamma)^{-1} = (\tau(\gamma^{-1})(g^{-1}),\gamma^{-1})
\]
and the range and source maps are given by 
\[
r((g,\gamma)) = (r(g),1_\Gamma) \quad\text{and}\quad s((g,\gamma)) = (\tau(\gamma^{-1})(s(g)),1_\Gamma),
\]
where $1_\Gamma$ denotes the identity element of $\Gamma$. Note that the unit space of $\cG \rtimes_\tau \Gamma$ is the subspace $\cG^{(0)} \times \{1_\Gamma\}$ of the product space $\cG \times \Gamma$, so it is homeomorphic to $\cG^{(0)}$; in particular it is locally compact and Hausdorff. Moreover, it follows directly from the definitions that if $\cG$ is an étale (resp.\ ample) groupoid, then $\cG \rtimes_\tau \Gamma$ is also étale (resp.\ ample).

Finally, we note that there is a natural projection $\pi \colon \cG \rtimes_\tau H \to H$ given by $\pi(g, h) = h$.
\end{ex}

\begin{defn}
    A topological groupoid  is called  \emph{elementary} if it is principal  and compact.
\end{defn}

\begin{defn}
    Let $\cG$ be an ample groupoid with compact unit space. We say that a $\cK\subset \cG$ is an \emph{elementary subgroupoid} if $\cK$ is a compact open principal  subgroupoid of $\cG$ and $\cK^{(0)} = \cG^{(0)}$.
\end{defn}

\begin{rem}
    When $\cK$ is an elementary groupoid: (1) the topology of $\cK$ agrees with the relative topology from $\cK^{(0)}\times \cK^{(0)}$;(2) the number $\sup_{x\in \cK^{(0)}} |r^{-1}(x)|  < \infty$.
\end{rem}
 
\begin{defn}
    A topological groupoid is called \emph{approximately finite} if it can be written as an increasing union of open elementary subgroupoids.
\end{defn}

It is known that approximate finite topological groupoids can be represented by Bratteli diagrams. 

\begin{ex}
\label{ex:SFT}
Let $n\geq 2$ be a natural number. Consider $\{0,1,\cdots,n-1\}^{\bN}$, the set of infinite sequences in $\{0,1,\cdots,n-1\}$, equipped with the product topology. Note that $\{0,1,\cdots, n-1\}^{\bN}$ is homeomorphic to the Cantor space. Given an element $x \in \{0,1,\cdots,n-1\}^{\bN}$, we denote its $i$-th entry in the infinite sequence by $x_i$. We define a topological groupoid $\cE_n$ with morphisms:
\[
\cE_n := \{ (y,x) \in \{0,1,\cdots, n-1\}^{\bN}  \times \{0,1,\cdots,n-1\}^{\bN} \mid \exists~ k \in \bN, \text{ such that } x_i=y_i \text{ for any } i\geq k \}.
\]

The source and range maps are given by $s(y,x) = (x,x)$ and $r(y,x) =(y,y)$. The topology of $\cE_n$ comes from the subspace topology of $\cE_n^{(0)}\times \cE_n^{(0)}$. We note $\cE_n$ can be written as the union of the elementary  subgroupoids $\cE_n[k]$:
\[
\cE_n[k] := \{ (y,x) \in \{0,1,c\dots, n-1\}^{\bN}  \times \{0,1,\cdots,n-1\}^{\bN} \mid  x_i=y_i \text{ for any } i\geq k \}.
\]

In particular, $\cE_n$ is approximately finite.
\end{ex}

\begin{defn}
    Let $\cG$ be an ample groupoid with compact unit space. We say that $\cG$ is \emph{almost finite} if for any compact subset $C \subset \cG$ and $\varepsilon>0$ there exists an elementary subgroupoid $\mathcal{K} \subset \cG$ such that
$$
\frac{|C \mathcal{K} x - \mathcal{K} x|}{|\mathcal{K} x|}<\varepsilon
$$
for all $x \in \cG^{(0)}$.
\end{defn}

We end this section with the definition of amenable groupoids. Let us start with some setup. Let $C_c(\cG)$ be the set of continuous functions from $\cG$ to $\bR$ with compact support.  If $\cG$ is an \'etale groupoid, then a \emph{continuous system of probability measures} for $\cG$ is a system $\{\lambda^x\mid x\in \cG^{(0)}\}$ of Radon probability measures $\lambda^x$ on $\cG$ with the support of $\lambda^x$ contained in $\cG^x:=\{g\in \cG \mid r(g) =x\}$ such that for $f\in C_c(\cG)$, the function $x \mapsto \int f d\lambda^x$ is continuous. Note that since $\cG^x$ is discrete, a Radon probability measure $\lambda^x$ on $\cG^x$ amounts to a function $\lambda^x\colon \cG^x \to [0,\infty)$ with $\Sigma_{z\in \cG^{^x}} \lambda^x(z) = 1$.

An \emph{approximate invariant continuous mean} for $\cG$ is a net $\lambda_i$ of continuous systems of probability measure for $\cG$ such that the net $(M_i: \gamma \mapsto ||\lambda_i^{r(\gamma)}(\gamma \cdot) - \lambda_i^{s(\gamma)}(\cdot)||_1)_i$ of functions from $\cG$ to $[0,\infty$ has the property that $M_i\mid_K \to 0$ uniformly for every compact $K\subseteq \cG$.

\begin{defn}
    Let $\cG$ be an \'etale groupoid. We say that $\cG$ is (topologically) amenable if $\cG$ admits an approximate invariant continuous mean.
\end{defn}

\section{Examples arising from transformation groupoids}\label{sec:tranf}
In this section, we provide several (concrete) examples of transformation groupoids that are neither almost finite nor purely infinite. We begin with the following observation.
\begin{prop}\label{prop:tranf}
    Let $\alpha:\Gamma\curvearrowright X$ be a minimal topological dynamical system of a non-amenable countable discrete group on a compact Hausdorff space. Suppose there exists a $\Gamma$-invariant probability measure $\mu$ on $X$. Then the transformation groupoid $X\rtimes \Gamma$ is neither almost finite nor purely infinite.
\end{prop}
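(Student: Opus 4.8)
The statement combines two negative conclusions, and the two halves are proved by quite different mechanisms, so I would handle them separately.

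First, the easy half: $X \rtimes \Gamma$ is not purely infinite. The existence of the $\Gamma$-invariant probability measure $\mu$ on $X$ is precisely the obstruction. Indeed, $\mu$ is an invariant measure for the groupoid $\cG = X \rtimes \Gamma$ in the sense of Definition~\ref{defn:invariant measure}: for any compact open bisection $\sigma \subseteq \cG$, writing $\sigma$ as a finite disjoint union of bisections of the form $\{(\alpha_\gamma(x), \gamma, x) : x \in U\}$ (with $U \subseteq X$ compact open), invariance of $\mu$ under each $\gamma$ gives $\mu(r(\sigma)) = \mu(s(\sigma))$. Now suppose for contradiction that $\cG$ were purely infinite. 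Take $V \subseteq X$ any nonempty proper compact open subset and $U = X$; purely infinite would give a compact open bisection $\sigma$ with $s(\sigma) = X$ and $r(\sigma) \subseteq V$, forcing $1 = \mu(X) = \mu(s(\sigma)) = \mu(r(\sigma)) \leq \mu(V) < 1$, a contradiction. (If $X$ has no nonempty proper compact open subset then $X$ is a point, $\Gamma$ acts trivially, and minimality forces $\Gamma$ trivial, contradicting non-amenability — so this degenerate case does not occur.)

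Second, the main half: $X \rtimes \Gamma$ is not almost finite. This is where the non-amenability of $\Gamma$ enters, and it is the crux of the argument. The plan is to argue by contradiction: if $\cG = X \rtimes \Gamma$ were almost finite, I would use the Følner-type approximation by elementary subgroupoids to manufacture an approximately invariant mean for $\Gamma$, contradicting non-amenability. Concretely, suppose $\cG$ is almost finite. Fix a finite symmetric subset $S \subseteq \Gamma$ and $\varepsilon > 0$, and let $C = S \times X \subseteq \cG$ (a compact set). Almost finiteness yields an elementary subgroupoid $\cK \subseteq \cG$ with $|C\cK x \setminus \cK x| / |\cK x| < \varepsilon$ for all $x \in X$. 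For each $x$, the finite set $F_x := \{\gamma \in \Gamma : (\alpha_\gamma^{-1}(x), \gamma, \text{something}) \in \cK\}$ — more precisely the set of group-components of elements of $\cK x \subseteq \cG$ — is a nonempty finite subset of $\Gamma$, and the almost-finiteness estimate translates into $|sF_x \triangle F_x| / |F_x| < \varepsilon$ for the relevant comparison (one must be a little careful about whether $\cK x$ is literally a subset of $\Gamma$; since $\cK$ is principal, $x \mapsto \cK x$ is a "partial Følner system" and the map $g \mapsto \pi(g)$ onto the group component is injective on $\cK x$). Then I would integrate against $\mu$: define $m(\gamma) = \int_X \mathbf{1}_{F_x}(\gamma)/|F_x| \, d\mu(x)$, or better, use the functions $x \mapsto |F_x|^{-1}|F_x \cap \gamma F_x|$ and an ultralimit/compactness argument over the directed set of pairs $(S,\varepsilon)$ to extract a finitely additive invariant mean on $\Gamma$, i.e. show $\Gamma$ is amenable — the desired contradiction.

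The main obstacle — and the step requiring the most care — is the second one: making the translation from the groupoid Følner condition "$|C\cK x \triangle \cK x|/|\cK x| < \varepsilon$ for all $x$" into a genuine Følner sequence for $\Gamma$. Two subtleties arise. (i) The sets $\cK x$ depend on $x$, so one does not get a single Følner set but a measurable field of them; the fix is to average over $\mu$, using $\Gamma$-invariance of $\mu$ to see that the averaged quantities $\int_X |F_x|^{-1}|\gamma F_x \triangle F_x|\,d\mu(x)$ are small and behave like a Følner condition after passing to a limit. (ii) One must verify that $F_x$ is genuinely a subset of $\Gamma$ on which left-translation by $S$ is modeled correctly by the groupoid multiplication $C\cK x$; this uses that $\cK$ is principal (so an element of $\cK$ over base point $x$ is determined by its group component, and $\cK x$ injects into $\cG$) and that the range/source structure of the transformation groupoid makes $g \mapsto \pi(g)g^{-1}$-type bookkeeping consistent. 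Once these bookkeeping points are nailed down, the contradiction with non-amenability is immediate, and combined with the first half this proves the proposition.
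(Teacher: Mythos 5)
Your two-pronged structure is exactly the paper's: the invariant measure rules out pure infiniteness, and non-amenability of $\Gamma$ rules out almost finiteness. The difference is that the paper simply cites these two facts (that almost finiteness of $X\rtimes\Gamma$ forces $\Gamma$ amenable, and that pure infiniteness excludes invariant probability measures, via \cite[Theorem 5.1]{Ma22}), whereas you prove them directly; your write-up is therefore more self-contained, and in the ample setting it works. Two comments on the execution. For the almost-finiteness half, the averaging over $\mu$ and the ultralimit are unnecessary: in the transformation groupoid the source fibre over $x$ is $\{(\gamma,x):\gamma\in\Gamma\}$, so $\cK x$ is canonically a finite subset $F_x\subseteq\Gamma$ (no appeal to principality of $\cK$ is needed for injectivity of the projection), and since the almost-finiteness inequality holds for \emph{every} $x\in X$, any single $F_x$ is already an $(S,\varepsilon)$-F\o lner set; running over $(S,\varepsilon)$ gives amenability of $\Gamma$ at once, so the invariant measure plays no role in this half -- what you flagged as the main obstacle is in fact not there. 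For the pure-infiniteness half, your inequality $\mu(V)<1$ does need a word: it follows because minimality plus invariance gives every nonempty open set positive measure (finitely many translates of $X\setminus V$ cover $X$), so the complement of a proper clopen $V$ has positive measure. Also, your dismissal of the degenerate case is not quite right: a compact Hausdorff space with no nonempty proper compact open subset need not be a point (any connected space qualifies), and on a one-point space minimality does not force $\Gamma$ trivial. This is harmless in the ample (totally disconnected, non-degenerate) setting where the paper's definition of pure infiniteness lives, but for general compact Hausdorff $X$ one should, as the paper does, invoke the non-ample notion and the result of \cite{Ma22} rather than the clopen-set argument.
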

\begin{proof}
   The proof is straightforward. Because first, it is well-known that the almost finiteness of $X\rtimes \Gamma$ implies that $\Gamma$ is necessarily amenable. In addition, the pure infiniteness of $X\rtimes \Gamma$ entails that there exists no invariant probability measure $\mu$ on $X$ (see, e.g., \cite[Theorem 5.1]{Ma22}).
\end{proof}

It is worth to mention that such systems exists for all non-amenable groups by \cite[Theorem 1]{El21}. In the following, we provide explicit examples from non-amenable residually finite groups, which are known as odometers.

\begin{ex}\label{ex:odometer}
    Let $\Gamma$ be a residually finite non-amenable group and let $\{\Gamma_n: n\in \mathbb{N}\}$ be a decreasing sequence of finite index normal subgroups of $\Gamma$ such that $\bigcap_{n\in \mathbb{N}}\Gamma_n=\{e_\Gamma\}$.  Denote by $\alpha_n: \Gamma\curvearrowright \Gamma/\Gamma_n$ the natural action of $\Gamma$ on the quotient group $\Gamma/\Gamma_n$, equipped with the uniform probability measure $\mu_n$. Note that each $\mu_n$ is $\Gamma$-invariant. 
    Denote by $X=\lim\limits_{\leftarrow}\Gamma/\Gamma_n$ the inverse limit of $\Gamma/\Gamma_n$, on which there is a minimal action $\alpha$ induced by these transitive $\alpha_n$, together with a $\Gamma$-invariant probability invariant measure $\mu$ obtained from these measures $\mu_n$. Moreover, such the action $\alpha$ is free as  for $x=(s_n\Gamma_n)\in X$, its stabilizer \[\operatorname{stab}_\Gamma(x)=\bigcap _{n=1}^\infty s_n\Gamma_ns^{-1}_n=\bigcap_{n=1}^\infty\Gamma_n=\{e_\Gamma\}\]
    because all $\Gamma_n$ are normal. In addition, such an action has dynamical comparison (see, e.g., \cite[Lemma 2.4]{Joseph24}). As a summary, these dynamical systems yield minimal principal transformation groupoids satisfying the comparison but are neither almost finite nor purely infinite by Proposition \ref{prop:tranf}. 
\end{ex}

\begin{rem}
We remark that the transformation groupoids constructed from Proposition \ref{prop:tranf} are never topologically amenable. This is because it is known that the topologically amenability, together with the existence of an invariant probability measure will imply that the acting group $\Gamma$ is amenable. On the other hand,
it was constructed in \cite{Joseph24} a topologically amenable transformation groupoid $X\rtimes \Gamma$ with a unique invariant ergodic measure $\mu$  that are neither almost finite nor purely infinite (see also \cite{HirshbergWu24}). But such groupoids are only effective but not even essentially principal. 
Finally, we remark that so far there have been no known counterexample to Matui's question that does not arise from transformation groupoids.  
\end{rem}

In the following sections, we will provide the first examples of minimal effective non-transformation groupoids that are neither almost finite nor purely infinite.  Such groupoids can be additionally made to be essentially principal in some cases.

\section{Twisted topological groupoids and their basic properties}\label{section:basi-prop}

In this section, we recall the twisted topological groupoid construction of Palmer and Wu \cite[Remark 2.15]{PalmerWu25} and study its basic properties. \textbf{In this section, we always assume that our groupoid is an \'etale groupoid with compact unit space. }

Let $\cG$ be an \'etale groupoid. We can consider the infinite product set $\prod_T \cG$. We view an element $f\in \prod_T \cG$ as a map $f \colon T\to \cG$. 

We introduce the following notation.  Let $\{t_1, \dots ,t_k\}$ be a finite subset of $T$ and let $A_1$, $A_2$, $\dots$, $A_k$ be subsets of ${\cG}$, we define a subset $\mathcal{O}(A_1, \dots, A_k; t_1, \dots, t_k)$ of $\prod_T \cG$ by
$$\{f\in \prod_T \cG \mid f(t_i) \in A_i \text{ for } i\in \{1,\dots, k\} \text{, and } f(t) \in \cG^{(0)} \text{ for }  t \in T-\{t_1, \dots, t_k\}\}.$$

Let $\mathcal{O}$ be the collection of all open sets of $\cG$. Let us first recall the following construction from \cite[Remark 2.15]{PalmerWu25}. 
\begin{defn}
\label{defn:groupoid-svg}
Let $T$ be a countable set, and $\cG$ be an \'etale groupoid with compact unit space.  The groupoid $T\cG$ is defined as follows:
\begin{enumerate}
\item Unit space: $\prod_T \cG^{(0)}$.
\item Morphism space: $\{ f \in \prod_T \cG \mid f(t) \in \cG^{(0)} \text{ for all but finitely many } t\in T\}$.
\item Topological basis: $T\mathcal{O}:=\{\mathcal{O}(O_1,\dots,O_k;t_1,\dots,t_k)|O_i\in \mathcal{O},\text{for }i\in\{1,\dots,k\},\{t_1,\dots,t_k\}\subset T\}$.
\end{enumerate}

\end{defn}

\begin{rem}\label{unit space of TG}
    As a set, the unit space of $T\cG$ is just the infinite product $\prod_T\cG^{(0)}$. After we equip $T\cG$ with the topological basis $T\mathcal{O}$, the induced topology on the unit space $(T\cG)^{(0)}$ coincides with the product topology on $\prod_T\cG^{(0)}$. Since we assumed that the unit space $\cG^{(0)}$ is compact, the unit space $T\mathcal{G}^{(0)}$ is also compact.  Moreover, if $\cG^{(0)}$ is Hausdorff (resp. second countable, compact, totally disconnected), so is $(T\cG)^{(0)}$. Recall that by definition the unit space of a topological groupoid needs to be locally compact. This forces us to assume that the unit space of $\cG$ is compact.
\end{rem}

Let us prove that this is a well-defined topological groupoid.

\begin{lem}\label{lem:top}
    $T\cO$ forms a topological basis.
\end{lem}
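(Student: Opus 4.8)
The plan is to verify the two standard axioms for a collection of subsets to be a basis for a topology on the morphism space of $T\cG$: first, that every element $f$ of the morphism space lies in some basic set; and second, that the intersection of two basic sets containing a given point contains a basic set containing that point. The first axiom is essentially immediate: given $f$ in the morphism space, the set $S=\{t\in T\mid f(t)\notin \cG^{(0)}\}$ is finite by definition, say $S=\{t_1,\dots,t_k\}$, and for each $i$ we may pick an open bisection (or just an open set) $O_i\subseteq \cG$ with $f(t_i)\in O_i$ since $\cG$ is étale; then $f\in \mathcal{O}(O_1,\dots,O_k;t_1,\dots,t_k)\in T\mathcal{O}$. (If $S=\emptyset$ one can take a single coordinate with $O_1=\cG^{(0)}$, or simply note $\prod_T\cG^{(0)}$ itself is $\mathcal{O}(\cG^{(0)};t_1)$.)

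For the second axiom I would take two basic sets
\[
U=\mathcal{O}(O_1,\dots,O_k;t_1,\dots,t_k),\qquad V=\mathcal{O}(P_1,\dots,P_m;s_1,\dots,s_m)
\]
and a point $f\in U\cap V$. The key observation is that the defining condition of $\mathcal{O}(\cdots)$ forces, for any coordinate $t$ not among the listed indices, that $f(t)\in\cG^{(0)}$. Let $W=\{t_1,\dots,t_k\}\cup\{s_1,\dots,s_m\}$, enumerated as $\{w_1,\dots,w_n\}$. For each $w_j$ I would set $Q_j$ to be $\cG^{(0)}$ if $w_j$ appears in neither list, or the single set $O_i$ (resp.\ $P_\ell$) if it appears only in the first (resp.\ second) list, or the intersection $O_i\cap P_\ell$ if it appears in both; in the last case, since $f(w_j)\in O_i\cap P_\ell$ and both are open, $Q_j$ is an open set containing $f(w_j)$. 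I claim $f\in \mathcal{O}(Q_1,\dots,Q_n;w_1,\dots,w_n)\subseteq U\cap V$. Membership of $f$ holds by construction: at the coordinates in $W$ we put $f$ into the chosen $Q_j$, and at all other coordinates $f(t)\in\cG^{(0)}$ because $t$ is in neither original list. For the inclusion, if $g\in\mathcal{O}(Q_1,\dots,Q_n;w_1,\dots,w_n)$ then at the $t_i$ we have $g(t_i)\in Q_j\subseteq O_i$ and at coordinates $t\notin\{t_1,\dots,t_k\}$ we have $g(t)\in\cG^{(0)}$ — for such $t$ in $W$ this is because $Q_j\subseteq\cG^{(0)}$ whenever $t$ is not among the $t_i$, and for $t\notin W$ it is part of the definition of the basic set. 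Hence $g\in U$, and symmetrically $g\in V$.

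The only subtle point — and thus the step I expect to require the most care in writing up cleanly — is the bookkeeping of coordinates: making sure that when a coordinate $w_j$ lies outside one of the two index lists, the corresponding $Q_j$ is contained in $\cG^{(0)}$, so that membership in the merged basic set still implies the "$\cG^{(0)}$ outside the listed set" condition for each of $U$ and $V$ separately. This is purely combinatorial and presents no real obstacle; there are no topological difficulties, since all we use about $\cG$ is that finite intersections of open sets are open and that $\cG^{(0)}$ is open in $\cG$ (which holds as $\cG$ is étale). I would therefore present the argument compactly, emphasizing the case analysis on whether a coordinate lies in one, both, or neither of the two index sets, and leave the routine verification of the inclusions to the reader.
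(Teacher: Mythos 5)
Your overall strategy is the same as the paper's: check the two basis axioms, using that $\cG^{(0)}$ is open in $\cG$ (étaleness) so that the two basic sets can be compared after adjusting their index lists. However, the construction you wrote down has a genuine error at coordinates that appear in only \emph{one} of the two index lists. If $w_j=s_\ell$ lies in the list of $V$ but not of $U$, you set $Q_j=P_\ell$; but then an element $g\in\mathcal{O}(Q_1,\dots,Q_n;w_1,\dots,w_n)$ may have $g(s_\ell)\in P_\ell\setminus\cG^{(0)}$, and such a $g$ violates the condition ``$g(t)\in\cG^{(0)}$ for $t\notin\{t_1,\dots,t_k\}$'' required for membership in $U$. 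Your verification sentence, that $Q_j\subseteq\cG^{(0)}$ whenever $w_j$ is not among the $t_i$, is false for $Q_j=P_\ell$ (an arbitrary open subset of $\cG$), so the claimed inclusion $\mathcal{O}(Q_1,\dots,Q_n;w_1,\dots,w_n)\subseteq U\cap V$ fails in general. Concretely, with $t_1\neq t_2$, take $U=\mathcal{O}(O_1;t_1)$ and $V=\mathcal{O}(\cG;t_2)$: your recipe gives $Q_2=\cG$ at the coordinate $t_2$, and the resulting set is strictly larger than $U\cap V$.

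The repair is exactly the point you flagged in your final paragraph but did not actually implement: at a coordinate appearing only in the $U$-list (resp.\ only in the $V$-list) you must take $Q_j=O_i\cap\cG^{(0)}$ (resp.\ $Q_j=P_\ell\cap\cG^{(0)}$). This is open because $\cG^{(0)}$ is open in the étale groupoid $\cG$, and it still contains $f(w_j)$, since $f\in U\cap V$ forces $f(w_j)\in\cG^{(0)}$ at such a coordinate. With that one-line amendment your argument is correct and is essentially the paper's proof, which handles the bookkeeping more economically by first padding each basic set with extra coordinates carrying $\cG^{(0)}$, via the identity $\mathcal{O}(O_1,\dots,O_k;t_1,\dots,t_k)=\mathcal{O}(O_1,\dots,O_k,\cG^{(0)};t_1,\dots,t_k,t)$, so that the two index lists coincide and the intersection is taken coordinatewise; the intersections with $\cG^{(0)}$ then appear automatically.
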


\begin{proof}
It is clear that every element of $T\cG$ lies in some element of $T\cO$. It remains to show that if two elements in $T\cO$ has nonempty intersection $U$, and $x\in U$, then one can find an element $V \in T\cO$ such that $V\subset U$.

Let $\mathcal{O}(O_1, \dots, O_k; t_1, \dots, t_k)$ and $ \mathcal{O}(O_1^{\prime} , \dots,  O_k^{\prime}; t'_1, \dots, t'_k)$ be two elements in  $T\cO$. Since we assume that $\cG$ is \'etale, $\cG^{(0)}$ is an open subset of $\cG$, that is, $\cG^{(0)}\in\cO$.  Note now that for any $t \not\in \{t_1,\cdots, t_k\}$,  
$$\mathcal{O}(O_1, \dots, O_k; t_1, \dots, t_k) = \mathcal{O}(O_1, \dots, O_k, \cG^{{0}}; t_1, \dots, t_k, t).$$ With this we could assume that $t_i = t_i'$ for $1\leq i \leq k$. But then 
\[    \mathcal{O}(O_1, \dots, O_k; t_1, \dots, t_k)\cap \mathcal{O}(O_1^{\prime} , \dots,  O_k^{\prime}; t_1, \dots, t_k)=\mathcal{O}(O_1\cap O_1^{\prime} , \dots, O_k \cap O_k^{\prime}; t_1, \dots, t_k),\]
which clearly still lies in $T\cO$. 
\end{proof}

 The source map $s^T$ of $T\cG$ is given by $s^T(f)(t):=s(f(t))$ for $t \in T$ and the range map $r^T$ is given by $r^T(f)(t):=r(f(t))$ for $t \in T$. When there is no ambiguity, we may abbreviate $s^T$ and $r^T$ by $s$ and $r$. For two elements $f_1$ and $f_2$ in ${T\cG}$ such that $s(f_1)=r(f_2)$, the multiplication is defined by $(f_1\cdot f_2)(t):=f_1(t)\cdot f_2(t)$ for $t \in T$ and the inverse of $f_1$ is defined by $(f_1^{-1})(t):=f_1(
t)^{-1}$ for $t \in T$.

\begin{lem}\label{lem:s-r-i-cont}
  The multiplication and inversion maps of $T\cG$ are continuous.    
\end{lem}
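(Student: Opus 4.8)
The plan is to verify continuity of the two structure maps by checking preimages of basic open sets in $T\mathcal{O}$, reducing everything to the corresponding (known) continuity statements for $\mathcal{G}$. First I would handle inversion, which is the easier of the two. The inversion map on $\mathcal{G}$ is continuous, so for an open $O\subseteq\mathcal{G}$ the set $O^{-1}$ is open; moreover $(\mathcal{G}^{(0)})^{-1}=\mathcal{G}^{(0)}$. Given a basic open set $\mathcal{O}(O_1,\dots,O_k;t_1,\dots,t_k)$, one sees directly from the formula $(f^{-1})(t)=f(t)^{-1}$ that its preimage under inversion is $\mathcal{O}(O_1^{-1},\dots,O_k^{-1};t_1,\dots,t_k)$, which lies in $T\mathcal{O}$. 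Hence inversion is continuous (indeed a homeomorphism, being its own inverse).

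For multiplication, I would work on the fibre product $T\mathcal{G}\,\tensor[_s]{\times}{_r}\,T\mathcal{G}$ with its subspace topology from the product; a basis for this topology is given by sets of the form $\bigl(\mathcal{O}(A_1,\dots,A_k;t_1,\dots,t_k)\times\mathcal{O}(B_1,\dots,B_k;t_1,\dots,t_k)\bigr)\cap (T\mathcal{G}\,\tensor[_s]{\times}{_r}\,T\mathcal{G})$, where, using the argument from Lemma~\ref{lem:top} (padding with $\mathcal{G}^{(0)}$), we may assume the two finite index sets coincide. Fix a composable pair $(f_1,f_2)$ with product $f=f_1f_2$, and fix a basic neighbourhood $\mathcal{O}(O_1,\dots,O_n;s_1,\dots,s_n)$ of $f$; again by padding I may assume that outside $\{s_1,\dots,s_n\}$ both $f_1$ and $f_2$ already take values in $\mathcal{G}^{(0)}$, so that the only coordinates that matter are $s_1,\dots,s_n$. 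For each $j$ we have $f_1(s_j)\cdot f_2(s_j)\in O_j$, and since multiplication $\mathcal{G}\,\tensor[_s]{\times}{_r}\,\mathcal{G}\to\mathcal{G}$ is continuous there are open sets $A_j\ni f_1(s_j)$, $B_j\ni f_2(s_j)$ in $\mathcal{G}$ such that every composable pair $(a,b)\in A_j\times B_j$ has $ab\in O_j$. Then the basic open neighbourhood $\bigl(\mathcal{O}(A_1,\dots,A_n;s_1,\dots,s_n)\times\mathcal{O}(B_1,\dots,B_n;s_1,\dots,s_n)\bigr)\cap (T\mathcal{G}\,\tensor[_s]{\times}{_r}\,T\mathcal{G})$ of $(f_1,f_2)$ is carried by multiplication into $\mathcal{O}(O_1,\dots,O_n;s_1,\dots,s_n)$: indeed at the coordinates $s_j$ the product lands in $O_j$ by choice of $A_j,B_j$, and at all other coordinates both factors lie in $\mathcal{G}^{(0)}$ so their product lies in $\mathcal{G}^{(0)}$. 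This proves continuity of multiplication.

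The only mildly delicate point — and the step I would treat most carefully — is the padding/bookkeeping that lets us assume all the relevant basic open sets are indexed by one common finite subset of $T$ containing all coordinates on which $f_1$, $f_2$, or $f$ is allowed to be non-unital; this is exactly the manoeuvre already used in the proof of Lemma~\ref{lem:top}, using that $\mathcal{G}^{(0)}$ is open in $\mathcal{G}$. Once that reduction is in place, the verification is coordinatewise and reduces to the continuity of the structure maps of $\mathcal{G}$ itself, with no further subtlety.
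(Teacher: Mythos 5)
Your proof is correct and follows essentially the same route as the paper: inversion is handled coordinatewise via preimages of basic open sets, and multiplication by fixing a composable pair, enlarging the finite index set (padding with $\cG^{(0)}$, as in Lemma~\ref{lem:top}) so all non-unit coordinates are covered, and then invoking continuity of the multiplication of $\cG$ coordinatewise to build a basic neighbourhood in the fibre product. The only difference is expository: the paper picks an auxiliary finite set $T'$ containing the $t_i$ rather than padding the target open set, which amounts to the same bookkeeping.
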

\begin{proof}
    We first show that the inversion map is continuous. Let $\mathcal{I}$ be the inversion map of $\cG$ and $\mathcal{I}_T$ be the inversion map of $T\cG$. For any open set $\mathcal{O}(O_1, \dots, O_k; t_1, \dots, t_k)$ in $T\cG$, we have 
    \[\mathcal{I}_T^{-1}(\mathcal{O}(O_1, \dots, O_k; t_1, \dots, t_k))=\mathcal{O}(\mathcal{I}^{-1}(O_1), \dots, \mathcal{I}^{-1}(O_k); t_1, \dots, t_k).\] For each $i\in \{1,\dots, k\}$, we know that $\mathcal{I}^{-1}(O_i)$ is open in $\cG$. Hence, $\mathcal{I}_T^{-1}(\mathcal{O}(O_1, \dots, O_k; t_1, \dots, t_k))$ is open and $\mathcal{I}_T$ is continuous.

Let $\mathcal{M}$ be the multiplication map of $\cG$ and $\mathcal{M}_T$ be the multiplication map of $T\cG$. For any open set $\mathcal{O}(O_1, \dots, O_k; t_1, \dots, t_k)$ in $T\cG$, we show that $\mathcal{M}_T^{-1}(\mathcal{O}(O_1, \dots, O_k; t_1, \dots, t_k))$ is open in $T\cG \ _{\mathrm{s}}\times_{\mathrm{ r}} T\cG$. Let $(f_1,f_2)$ be an arbitrary element in the preimage. Since $f_1$ and $f_2$ are composable, there exists a finite subset $T^{\prime}$ of $T$ that contains $\{t_1,\dots,t_k\}$ such that for every $t \in T-T^{\prime}$, $f_1(t)=f_2(t) \in 
\cG^{(0)}$. For $t^{\prime} \in T^{\prime}$, we have that $f_1(t^{\prime})\cdot f_2(t^{\prime})$ lies in $O_i$ for some $i\in \{1,\dots, k\}$, or lies in $\cG^{(0)}$, and both of them are open. Denote by $U_{t^\prime}$ the open set $O_i$ or $\cG^{(0)}$, and we know that $\mathcal{M}^{-1}(U_{t^\prime})$ is open in $\cG \ _{\mathrm{s}}\times_{\mathrm{ r}} \cG$, so there exists an open set $U_{t^\prime ,1}$ of $\cG$ containing $f_1(t^{\prime})$ and an open set $U_{t^\prime ,2}$ of $\cG$ containing $f_2(t^{\prime})$ such that $(U_{t^\prime ,1} \times U_{t^\prime ,2}) \cap (\cG \ _{\mathrm{s}}\times_{\mathrm{ r}} \cG) \subseteq \mathcal{M}^{-1}(U_{t^\prime})$. Then using the open sets $U_{t^\prime ,1}$ and $U_{t^\prime ,2}$ for every $t^\prime \in T^\prime$, we can construct an open neighborhood 
\[(T\cG_s\times_rT\cG)\cap \cO(U_{t', 1}; t'\in T')\times \cO(U_{t', 2}; t'\in T')\]
of $(f_1,f_2)$ that contained in $ \mathcal{M}_T^{-1}(\mathcal{O}(O_1, \dots, O_k; t_1, \dots, t_k))$. Thus, $\mathcal{M}_T^{-1}(\mathcal{O}(O_1, \dots, O_k; t_1, \dots, t_k))$ is open and the multiplication map is continuous.
\end{proof}
With Lemma \ref{lem:top} and Lemma \ref{lem:s-r-i-cont}, we have that $T\cG$ is a well-defined topological groupoid. And the new groupoid $T\cG$ inherits many properties from $\cG$.

\begin{prop}\label{from G to TG}
    Let $\cG$ be an \'etale groupoid with compact unit space and $T$ be a countable set, then:
\begin{enumerate}
    \item  $T\cG$ is \'etale.

    \item  If $\cG$ is ample, so is $T\cG$.

    \item  If $\cG$ is minimal, so is $T\cG$.

    \item \label{prop:GtoTG-p-ep} If $\cG$ is  principal (resp. effective), so is $T\cG$.

    \item  If $\cG$ is an almost finite, so is $T\cG$.

    \item   If $\cG$ is not topologically amenable, so is $T\cG$.
    \end{enumerate}
\end{prop}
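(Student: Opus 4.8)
The overall strategy is to reduce each assertion to the corresponding property of $\cG$ "one coordinate at a time", exploiting that a basic open set $\cO(O_1,\dots,O_k;t_1,\dots,t_k)$ constrains only finitely many coordinates and forces all the others into $\cG^{(0)}$.

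For \textbf{(1)} the key observation is that if each $O_i$ is an \emph{open bisection} of $\cG$, then $\cO(O_1,\dots,O_k;t_1,\dots,t_k)$ is an open bisection of $T\cG$: since $s^T$ and $r^T$ act coordinatewise, their restrictions to this set are bijections onto $\cO(s(O_1),\dots,s(O_k);t_1,\dots,t_k)$ and $\cO(r(O_1),\dots,r(O_k);t_1,\dots,t_k)$ respectively, with inverses obtained from $(s|_{O_i})^{-1}$ and $(r|_{O_i})^{-1}$ in the constrained coordinates and the identity elsewhere; continuity of all these maps is immediate from the definition of the basis $T\cO$. Because open bisections form a basis of $\cG$, the sets above form a basis of $T\cG$, so $s^T$ and $r^T$ are local homeomorphisms. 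Combined with the routine point-set checks that $T\cG$ is Hausdorff, second countable, and -- each open bisection being homeomorphic to an open subset of the compact Hausdorff unit space $(T\cG)^{(0)}=\prod_T\cG^{(0)}$ -- locally compact, this shows that $T\cG$ is étale. Part \textbf{(2)} is then immediate, as a countable product of totally disconnected spaces is totally disconnected, so $(T\cG)^{(0)}$ is totally disconnected.

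For \textbf{(3)}, fix $x,y\in(T\cG)^{(0)}$ and a basic open neighbourhood $\cO(O_1,\dots,O_k;t_1,\dots,t_k)$ of $y$ in $(T\cG)^{(0)}$, where each $O_i$ is an open neighbourhood of $y(t_i)$ in $\cG^{(0)}$. By minimality of $\cG$ choose, for each $i$, an element $g_i\in\cG$ with $s(g_i)=x(t_i)$ and $r(g_i)\in O_i$, and define $f\in T\cG$ by $f(t_i)=g_i$ and $f(t)=x(t)$ for $t\notin\{t_1,\dots,t_k\}$. Then $s^T(f)=x$ while $r^T(f)\in\cO(O_1,\dots,O_k;t_1,\dots,t_k)$, so the $\cG$-orbit of $x$ meets this neighbourhood; hence $T\cG$ is minimal. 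For \textbf{(4)}, coordinatewiseness of $r^T,s^T$ gives $(T\cG)'=\{f\in T\cG\mid f(t)\in\cG'\text{ for all }t\in T\}$; if $\cG$ is principal then $\cG'=\cG^{(0)}$, whence $(T\cG)'=\prod_T\cG^{(0)}=(T\cG)^{(0)}$ and $T\cG$ is principal. For effectivity we argue by contraposition: assume $T\cG$ is not effective. Since $(T\cG)^{(0)}$ is open (by (1)) and contained in $\mathrm{int}((T\cG)')$, and is closed (being the unit space of a Hausdorff groupoid), the open set $\mathrm{int}((T\cG)')\setminus(T\cG)^{(0)}$ is nonempty and hence contains a nonempty basic open $\cV=\cO(O_1,\dots,O_k;t_1,\dots,t_k)$, with each $O_i\neq\emptyset$, $\cV\subseteq(T\cG)'$, and $\cV\cap(T\cG)^{(0)}=\emptyset$. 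The last disjointness forces some $O_j$ to satisfy $O_j\cap\cG^{(0)}=\emptyset$ (otherwise picking $x_i\in O_i\cap\cG^{(0)}$ and arbitrary units in the remaining coordinates produces a point of $\cV\cap(T\cG)^{(0)}$); and given any $g\in O_j$, extending it to an element $f\in\cV$ by filling the other constrained coordinates with points of the $O_i$ and the rest with units, we get $f\in(T\cG)'$, so $g=f(t_j)\in\cG'$; thus $O_j$ is a nonempty open subset of $\cG'\setminus\cG^{(0)}$ and $\cG$ is not effective.

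I expect the effectivity case of \textbf{(4)} to be the point requiring the most care: the mechanism is that a point of $(T\cG)'$ lying outside the unit space and admitting a neighbourhood inside $(T\cG)'$ localizes, in one of its non-unit coordinates, to an open subset of $\cG'$ disjoint from $\cG^{(0)}$. The remaining items are routine coordinatewise verifications, the only mild nuisance being the standard point-set checks (Hausdorffness, second countability, local compactness) in \textbf{(1)}.
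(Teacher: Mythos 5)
Your proposal is correct and follows essentially the same coordinatewise strategy as the paper's own proof: the arguments for \'etaleness (via basic open sets with bisection entries and the image formula under $s^T$), ampleness, minimality, and principality are the same. The only cosmetic difference is in the effectivity part of (4), where you argue by contraposition, extracting a nonempty open subset of $\cG'\setminus\cG^{(0)}$ from a basic open set contained in $(T\cG)'$ and disjoint from the unit space, whereas the paper argues directly by perturbing a single non-unit coordinate of an isotropy element using effectiveness of $\cG$; the underlying coordinate-localization mechanism is identical.
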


\begin{proof}
    (1) Since $T\cO$ forms a topological basis, it's easy to see that $T\cG$ and $T\cG^{(0)}$ are also second countable, Hausdorff and locally compact. To show $T\cG$ is  \'etale, it suffices to prove that the source and range maps $s^T$ and $r^T$ are local homeomorphisms. Pick any point $f \in T\cG$. By definition for all but finitely many $t\in T$, one has $f(t)\in \cG^{(0)}$. List them as $t_1,\cdots,t_k$. For each $t_i$, since $\cG$ is \'etale, there is a neighborhood $U_i$ for $f(t_i)$ such that $s\mid_{U_i}$ is a homeomorphism from $U_i$ to $s(U_i)$. This implies that $s^T$ restricted to $\cO((U_1,\dots ,U_k;t_1,\dots,t_k))$ is a homeomorphism. Finally, let $B_1, \dots, B_k$ be open bisections in $\cG$ and observe that
    \[s^T(\cO(B_1,\dots, B_k;t_1,\dots,t_k))=\cO(s(B_1),\dots, s(B_k);t_1,\dots,t_k);\]
    which implies that $s^T$ is an open map. Therefore, the map $s^T$ is a local homeomorphism. The case of range map is similar.

    (2) Note that a groupoid is ample if and only if it is \'etale and its unit space is totally disconnected. then by   (1) and Remark \ref{unit space of TG},  $T\cG$ is also ample.

    (3) We need to show that for every $x \in (T\cG)^{(0)}$, the orbit $r(T\cG  x)$  is dense in $(T\cG)^{(0)}$. For an arbitrary open set $\mathcal{O}(U_1,\dots ,U_k;t_1,\dots,t_k)\subset (T\cG)^{(0)}$, where $U_1,\dots ,U_k$ are open subsets of $\cG^{(0)}$, it suffices to find an element $f\in T\cG$ such that $r(fx)\in \mathcal{O}(U_1,\dots ,U_k;t_1,\dots,t_k)$. Since $\cG$ is minimal, for each $x(t_i)\in\cG^{(0)}$ and each open subset $U_i\subset\cG^{(0)}$, there exists a $g_i\in\cG$ such that $s(g_i)=x(t_i)$ and $r(g_i)\in U_i$ for all $i\in\{1,\dots,k\}$.  Let $f\in T\cG$ with $f(t_i)=g_i$ for $i \in \{1,\dots,k\}$ and $f(t)=x(t)$ for $t\in T-\{1,\dots,k\}$.  Then we have $s(f)=x$ and $r(fx)=r(f) \in \mathcal{O}(U_1,\dots ,U_k;t_1,\dots,t_k)$.

    (4) If $\cG$ is principal, then for every $f\in T\cG$ with $r(f)=s(f)$, we have $r(f(t))=s(f(t)) \in \cG$ for all $t\in T$. By $\cG$ being principal, we have $f(t)=r(f(t))=s(f(t)) \in \cG^{(0)}$ for $t\in T$, which implies $f\in (T\cG)^{(0)}$.
    
Now assume that the groupoid $\cG$ is effective. By definition,  for every $g \in \cG^\prime-\cG^{(0)}$ and every neighborhood $O_g$ of $g$, there exists an element $\bar{g} \in O_g$ such that $\bar{g} \notin \cG^\prime$. To show that $T\cG$ is effective, it suffices to prove that for every $f\in (T\cG)^\prime-(T\cG)^{(0)}$ and every neighborhood $U_f$ of $f$, one can find $\bar{f} \in U_f$ such that $\bar{f} \notin (T\cG)^\prime$. Note that $T\mathcal{O}$ is a topological basis, for each $U_f$ there exists an open subset $\mathcal{O}(O_1,\dots ,O_k;t_1,\dots,t_k)$ in $T\mathcal{O}$ that is contained in $U_f$ and contains $f$. Since $f\in (T\cG)^\prime-(T\cG)^{(0)}$, without loss of generality, we can assume that $O_1$ is not contained in $\cG^{(0)}$, and $f(t_1)\in \cG^\prime - \cG^{(0)}$. Then for the neighborhood ${O}_1$ of $f(t_1)$, using that $\cG$ is effective, there exists a $g_1 \in {O}_1$ such that $g_1\notin \cG^\prime$. Let $\bar{f}\in T\cG$ with $\bar{f}(t_1)=g_1$ and $\bar{f}(t)=f(t)$ for $t \in T-\{t_1\}$. Then $\bar{f}\in U_f$ and $\bar{f} \notin (T\cG)^\prime$ since $\bar{f}(t_1)=g_1 \notin \cG^\prime$.

(5) Let $K\subset T\cG$ be a compact set and $\varepsilon>0$. Without loss of generality, one may assume $(T\cG)^{(0)}\subset K$. First, for $K$ there exists a finite set $F\subset T$ such that $K\subset \prod_{i\in F}\cG\times\prod_{i\in T-F}\cG^{(0)}$. Then for each $i\in F$, define $K_i=\pi_i(K)$, where $\pi_i: T\cG\to \cG$ is the projection map on the $i$-th coordinate. Note that $\cG^{(0)}\subset K_i$ holds for any $i\in F$. Moreover, define $K'=\prod_{i\in F}K_i\times \prod_{i\in T-F}\cG^{(0)}$ and choose $\delta>0$ such that $\delta^{|F|}<\varepsilon$.

    Then because $\cG$ is assumed to be almost finite, then for the compact $K_i\subset \cG$ and $\delta$, there exists an elementary subgroupoid $\cH_i\subset \cG$ satisfying 
    \[|K_i\cH_i u_i-\cH_iu_i|<\delta|\cH_iu_i|\]
    for any $u_i\in \cH^{(0)}_i=\cG^{(0)}$. Define $\cH=\prod_{i\in F}\cH_i\times \prod_{i\in T-F}\cG^{(0)}$, which is still an elementary groupoid by definition. Then for any $u=(u_i)_{i\in T}\in (T\cG)^{(0)}=\prod_{i\in T}\cG^{(0)}$, one has
    \[|K\cH u-\cH u|\leq |K'\cH u-\cH u|=\prod_{i\in F}|K_i\cH_i u_i-\cH_iu_i|<\prod_{i\in F} \delta|\cH_iu_i|\leq \varepsilon |\cH u|.\]
    This implies that $T\cG$ is almost finite.

    (6) For the last claim, choose a $u_0\in \cG^{(0)}$ and fix a $t_0\in T$, then there exists a natural embedding from $\cG\to T\cG$ by $\gamma\mapsto f$ such that $f(t_0)=\gamma$ and $f(t)=u_0$ for any $t\neq t_0$. This implies that $\cG$ can be viewed as a closed subgroupoid of $T\cG$. Now, suppose $T\cG$ is amenable. then so is $\cG$ by \cite[Proposition 4.1.14]{Sim17}. Therefore, the claim follows.
\end{proof}

    Let us now define the twisted topological groupoid. Assume that  $\Gamma$ is a countable group acting  on a countable set $T$. This action induces an action of $\Gamma$ on $T\cG$: given $\gamma \in \Gamma$, let $\tau_\gamma : T\cG \rightarrow T\cG$ be the isomorphism of groupoid $T\cG$ defined by:
    \[[\tau_\gamma(f)](t):=f(\tau^{-1}t), \text{ for } f\in T\cG \text{ and } t \in T.\]
    And we denote the action by $\gamma \cdot f:=\tau_\gamma(f)$, for $f\in T\cG$ and $\gamma \in \Gamma$. With this action, one can consider the semi-direct product $T\cG\rtimes_\tau \Gamma$ (cf. Example \ref{ex:semi-direct-product}). We shall abbreviate $T\cG\rtimes_\tau \Gamma$ by $T\cG\rtimes \Gamma$ when the action is clear from the context.
    
    \begin{defn}
    Let $\Gamma$ be a countable group acting on a countable set $T$ and let $\cG$ be an \'etale groupoid with compact unit space. Then the \emph{twisted topological groupoid} is defined by $T\cG\rtimes \Gamma$.
\end{defn}
    
    Recall that the source map $s_{\Gamma}$ of $T\cG\rtimes \Gamma$  is defined by $s_{\Gamma}(f,\gamma)=(\gamma^{-1}\cdot s(f),1_{\Gamma})$, and  the range map $r_{\Gamma}(f,\gamma)=(r(f),1_{\Gamma})$. 
The topology on $(T\cG)\rtimes \Gamma$ is just the product topology, and so the topological basis consists of $\mathcal{O}(O_1,\dots ,O_k;t_1,\dots,t_k)\times\{\gamma\}$ for all $\mathcal{O}(O_1,\dots ,O_k;t_1,\dots,t_k)\in T\mathcal{O}$ and $\gamma\in \Gamma$. 

\begin{obs} \label{TG is a subgroupoid}
    The groupoid $T\cG$ can be viewed as a subgroupoid of $T\cG\rtimes \Gamma$ by identifying $f\in T\cG$ with $(f, 1_{\Gamma})\in T\cG\rtimes \Gamma$. And the unit space of $T\cG\rtimes \Gamma$ is $T\cG^{(0)}\times \{1_\Gamma\}$ which is the same as the unit space of $T\cG$.
\end{obs}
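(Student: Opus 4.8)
The plan is to produce an explicit map $\iota\colon T\cG \to T\cG\rtimes\Gamma$, $f\mapsto (f,1_\Gamma)$, and check that it is an injective, continuous, open groupoid homomorphism whose image is an open subgroupoid; the claim about the unit space then drops out of the general formula recorded in Example~\ref{ex:semi-direct-product}.

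First I would verify that $\iota$ is compatible with the algebraic structure. Since the formula $[\tau_\gamma(f)](t) = f(\gamma^{-1} t)$ gives $\tau_{1_\Gamma} = \mathrm{id}_{T\cG}$, the pair $(f_1,1_\Gamma)$ and $(f_2,1_\Gamma)$ is composable in $T\cG\rtimes\Gamma$ exactly when $f_1$ and $f_2$ are composable in $T\cG$, in which case $(f_1,1_\Gamma)(f_2,1_\Gamma) = (f_1 f_2, 1_\Gamma)$ and $(f,1_\Gamma)^{-1} = (f^{-1},1_\Gamma)$. The formulas for the structure maps of the twisted groupoid give $r_\Gamma(f,1_\Gamma) = (r(f),1_\Gamma)$ and $s_\Gamma(f,1_\Gamma) = (1_\Gamma^{-1}\cdot s(f),1_\Gamma) = (s(f),1_\Gamma)$, so $\iota$ intertwines $r,s$ with $r_\Gamma, s_\Gamma$ and is a groupoid homomorphism; it is plainly injective.

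Next I would deal with the topology. By construction the topology on $T\cG\rtimes\Gamma$ is the product of the topology on $T\cG$ with the discrete topology on $\Gamma$, so $T\cG\times\{1_\Gamma\}$ is open in $T\cG\rtimes\Gamma$, and the projection $T\cG\rtimes\Gamma\to T\cG$ restricts to a continuous inverse of $\iota$ on its image; hence $\iota$ is a homeomorphism onto the open subgroupoid $T\cG\times\{1_\Gamma\}$. For the last assertion, Example~\ref{ex:semi-direct-product} says that the unit space of any semi-direct product $\cH\rtimes_\tau\Gamma$ is $\cH^{(0)}\times\{1_\Gamma\}$; taking $\cH = T\cG$ and recalling from Definition~\ref{defn:groupoid-svg} that $(T\cG)^{(0)} = \prod_T\cG^{(0)}$, we get that the unit space of $T\cG\rtimes\Gamma$ equals $\prod_T\cG^{(0)}\times\{1_\Gamma\}$, which $\iota$ identifies with $(T\cG)^{(0)}$. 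There is no genuine obstacle here: the statement is a direct unwinding of Example~\ref{ex:semi-direct-product}, and the only point deserving a moment's attention is that $\tau_{1_\Gamma}$ is the identity, which is what forces composability and multiplication inside $T\cG\times\{1_\Gamma\}$ to agree with those of $T\cG$.
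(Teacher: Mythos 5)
Your proposal is correct and follows the same route the paper implicitly takes: the Observation is stated without proof precisely because it is a direct unwinding of the semi-direct product structure of Example~\ref{ex:semi-direct-product} (with $\tau_{1_\Gamma}=\mathrm{id}$) together with the product topology on $T\cG\times\Gamma$, which is exactly what you verify. Your explicit check that $f\mapsto(f,1_\Gamma)$ is an injective open embedding of groupoids and that the unit space is $(T\cG)^{(0)}\times\{1_\Gamma\}$ fills in the routine details faithfully.
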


Just as in the case of $T\cG$, the twisted topological groupoid $T\cG\rtimes \Gamma$  also inherits many properties from $\cG$.

\begin{prop}\label{from G to TGxGamma}
    Let $\Gamma$ be a group acting  on a countable set $T$ and let $\cG$ be an \'etale groupoid with compact unit space, then
    \begin{enumerate}
        \item $T\cG\rtimes \Gamma$ is \'etale.
        \item  If $\cG$ is ample, so is $T\cG\rtimes \Gamma$.
        \item If $\cG$ is minimal, so is $T\cG\rtimes \Gamma$.
        \item \label{prop:tgg-case-ep} Suppose that $\Gamma$ acts faithfully on $T$. If $\cG$ is effective and $\cG^{(0)}$ has no isolated point, then $T\cG\rtimes \Gamma$ is effective. 
        \item \label{prop:tgg-case-ep2}Suppose that $\Gamma$ is an infinite group acting freely on $T$.  If $\cG$ is effective and $\cG^{(0)}$ has at least two points, then $T\cG\rtimes \Gamma$ is effective.
        
    \end{enumerate}    
\end{prop}  
\begin{proof}

    The first 3 properties follows immediately from Proposition \ref{from G to TG} and the definition of the semi-direct product, see Example \ref{ex:semi-direct-product}.

    Let us prove (\ref{prop:tgg-case-ep}) now. Since $\cG$ is \'etale, there exists $\mathcal{B}$ consisting of open bisections of $\cG$ which forms a topological basis for $\cG$. Let $T\mathcal{B}_\Gamma$ be the collection of all open subsets of $T\cG\rtimes \Gamma$ that have the form $\mathcal{O}(B_1,\dots ,B_k;t_1,\dots,t_k)\times\{\gamma\}$ where $B_i\in \mathcal{B}$ for $i=1,\dots,k$ and $\gamma \in \Gamma$.   Let $(f,\gamma) \in (T\cG\rtimes \Gamma)^\prime-(T\cG\rtimes \Gamma)^{(0)}$, and let $\mathcal{O}(B_1,\dots,B_k;t_1,\dots,t_k)\times\{\gamma\}$ be an arbitrary open set in $T\mathcal{B}_\Gamma$ that contains $(f,\gamma)$. Without loss of generality, we can assume that $\gamma \neq 1_\Gamma$. Since when $\gamma=1_\Gamma$, by Observation \ref{TG is a subgroupoid} and Proposition \ref{from G to TG}, we can find an element $(\bar{f},1_\Gamma) \in \mathcal{O}(B_1,\dots,B_k;t_1,\dots,t_k)\times\{1_\Gamma\}$ such that $(\bar{f},1_\Gamma) \notin (T\cG\rtimes \Gamma)^\prime$.

    Now, since $\gamma \neq1_\Gamma$, and $\Gamma$ acts on $T$ faithfully. Then there exists some $t_0\in T$ such that $\gamma\cdot t_0 \neq t_0$. And there exists a clopen bisection $U_0$ contained in $\cG$ such that $f(t_0)\in U_0$, where $U_0=B_i$ for some $i \in \{1,2,\dots,k\}$, or $U_0=\cG^{(0)}$. Since $\cG^{(0)}$ has no isolated point and $U_0$ is an open bisection, for $f(t_0)\in U_0$, there exists another element ${g}_0\in U_0$ such that $r(g_0) \neq r(f)(t_0)$. Define $\tilde{f}\in T\cG$ by $\tilde{f}(t_0):=g_0$ and $\tilde{f}(t):=f(t)$ for $t\in T-\{t_0\}$. Then $\tilde{f} \in \mathcal{O}(B_1,\dots,B_k;t_1,\dots,t_k)$.
    Note that $(f,\gamma) \in (T\cG\rtimes \Gamma)^\prime$ implies that $s_\Gamma(f,\gamma)=r_\Gamma(f,\gamma)$, then we have $s_\Gamma(f,\gamma)(t)=s(f)(\gamma\cdot t)=r(f)(t)=r_\Gamma(f,\gamma)(t)$ for all $t\in T$. Using $\tilde{f}(\gamma\cdot t_0)=f(\gamma\cdot t_0)$ and $\tilde{f}( t_0)=g_0$, we have 
    $$s_\Gamma(\tilde{f},\gamma)(t_0)=s(\tilde{f})(\gamma\cdot t_0)=s(f)(\gamma\cdot t_0)=r_\Gamma(f,\gamma)(t_0)=r(f)(t_0)$$ 
    and
    $$r_\Gamma(\tilde{f},\gamma)(t_0)=r(\tilde{f})(t_0)=r(g_0)$$

Since $g_0$ is an element such that $r(g_0)\neq r(f)(t_0)$, then $s_\Gamma(\tilde{f},\gamma)(t_0) \neq r_\Gamma(\tilde{f},\gamma)(t_0)$ and $(\tilde{f},\gamma) \in \mathcal{O}(B_1,\dots,B_k;t_1,\dots,t_k)\times\{\gamma\}$ such that $(\tilde{f},\gamma) \notin (T\cG\rtimes \Gamma)^\prime$. This implies that $T\cG\rtimes \Gamma$ is effective.

Finally, let us prove (\ref{prop:tgg-case-ep2}). Without loss of generality, we consider $(f,\gamma)\in (T\cG\rtimes\Gamma)^\prime$ with $\gamma \neq 1_\Gamma$. Let $\mathcal{O}(U_1,\dots,U_k;t_1,\dots,t_k)\times\{\gamma\}$ be an arbitrary neighborhood of $(f,\gamma)$, where $U_i$ are open bisections. Note that if $\tilde{t}\in T-(\{\gamma^{-1}\cdot t_1,\dots,\gamma^{-1}\cdot t_k\}\cup\{t_1,\dots,t_k\})$, then both $f(\tilde{t})$ and $f(\gamma\cdot \tilde{t})$ lie in $\cG^{(0)}$. Since $\Gamma$ is an infinite group and the action is free, such $\tilde{t}$ always exists and $\gamma\cdot\tilde{t}\neq\tilde{t}$. Applying the source and range maps, we have 
$$s_\Gamma(f,\gamma)(\tilde{t})=s(f)(\gamma\tilde{t})=f(\gamma\cdot\tilde{t})$$
and
$$r_\Gamma(f,\gamma)(\tilde{t})=r(f)(\tilde{t})=f(\tilde{t}).$$

By $(f,\gamma)\in (T\cG\rtimes\Gamma)^\prime$, we have $f(\tilde{t})=f(\gamma\cdot\tilde{t})$. From the free action, we have that $\gamma\cdot \tilde{t}\neq\tilde{t}$. Since $\cG^{(0)}$ has at least two points, let $g\in \cG^{(0)}$ such that $g\neq f(\tilde{t})$. Define $\tilde{f}$ in $\mathcal{O}(U_1,\dots,U_k;t_1,\dots,t_k)$ by $\tilde{f}(t)=f(t)$ for $t\neq\tilde{t}$, and $\tilde{f}(\tilde{t})=g$. Then $s(\tilde{f},\gamma)(\tilde{t})=f(\tilde{t})$ and $s(\tilde{f},\gamma)(\tilde{t})=g$. Hence, there exists $(\tilde{f},\gamma)\in \mathcal{O}(U_1,\dots,U_k;t_1,\dots,t_k)\times\{\gamma\}$ such that $(\tilde{f},\gamma)\notin (T\cG\rtimes\Gamma)^\prime$. This implies that $T\cG\rtimes \Gamma$ is effective.
\end{proof}

\begin{rem}
    In general, the assumption in Proposition \ref{from G to TGxGamma} (\ref{prop:tgg-case-ep}) that the unit space $\cG^{(0)}$ has no isolated point cannot be omitted. Suppose that $p\in \cG^{(0)}$ is an isolated point and suppose that there exists an involution $\gamma\in \Gamma$ such that $\gamma(t_1)=t_2$ and $\gamma(t_2)=t_1$ for different $t_1$, $t_2$ in $T$, and $\gamma(t)=t$ for $t\in T-\{t_1,t_2\}$. Then we have the nontrivial neighborhood $\cO(\{p\},\{p\};t_1,t_2)\times\{\gamma\}\subseteq(T\cG\rtimes\Gamma)^\prime$.  In this case, $T\cG\rtimes \Gamma$ is not effective.
\end{rem}
\begin{rem}\label{never principal}
    Unlike Proposition \ref{from G to TG} (\ref{prop:GtoTG-p-ep}), the twisted topological groupoid $T\cG\rtimes \Gamma$ is in general not principal even if $\cG$ is principal . In fact, pick $x\in \cG^{(0)}$, let $\bar{x}$ be the element of $T\cG$ defined by $\bar{x}(t)=x$ for any $t\in T$. Then $(\bar{x},\gamma)$ lies in the isotropy bundle for any $\gamma \in \Gamma$.
\end{rem}

\section{Proof of the main theorem}   

In this section, we construct minimal ample effective non-transformation groupoids that are neither almost finite nor purely infinite. Such groupoids can be made essentially principal  in the unique invariant probability measure case. See Example \ref{ex:non a.f. and non p.i.}.

Let $\Gamma$ be a non-amenable group, and let the set $T$ also be $\Gamma$ with $\Gamma$ acting freely on it by left multiplication. For arbitrary  ample groupoid $\cG$ with compact unit space, we consider the twisted topological groupoid ${\Gamma}\cG\rtimes \Gamma$. We proceed to show that this groupoid is not almost finite. 

Recall that the amenability of a group can be characterized by the Følner condition, for a non-amenable group $\Gamma$, we have the following counting lemma. See for example \cite[Proposition 4.7.1]{Ceccherini-SCoornaert23}.

\begin{lem}\label{counting in nonamenable group}
    Let $\Gamma$ be any non-amenable group. Then there is a finite 
    subset $B$ of $\Gamma$ and some $\delta>0$ such that for every finite subset $K\subset \Gamma$, one has $|BK-K|\geq \delta |K|$.
\end{lem}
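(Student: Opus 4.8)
The plan is to read the lemma off directly from the Følner characterization of amenability, which is exactly what the cited \cite[Proposition~4.7.1]{Ceccherini-SCoornaert23} supplies. One convenient form of that characterization is: a group $\Gamma$ is non-amenable if and only if there exist a finite subset $B\subseteq\Gamma$ and a real number $\lambda>1$ such that $|BK|\geq\lambda|K|$ for every finite subset $K\subseteq\Gamma$. Equivalently, one uses the negation of the Følner condition in the form: there exist a finite $B$ and $\varepsilon_0>0$ with $|BK\,\triangle\,K|\geq\varepsilon_0|K|$ for all finite $K$, where one may take $1_\Gamma\in B$, so that $K\subseteq BK$ and the symmetric difference $BK\,\triangle\,K$ coincides with the one-sided difference $BK-K$.

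Granting this, the lemma is immediate. Fix $B$ and $\lambda>1$ as above and let $K\subseteq\Gamma$ be finite. Since $BK\cap K\subseteq K$, we have
\[
|BK-K| \;=\; |BK|-|BK\cap K| \;\geq\; |BK|-|K| \;\geq\; (\lambda-1)\,|K|,
\]
so the lemma holds with $\delta:=\lambda-1>0$, the case $K=\emptyset$ being trivial. If instead one starts from the symmetric-difference form with $1_\Gamma\in B$, then $BK-K=BK\,\triangle\,K$ and one may take $\delta:=\varepsilon_0$ directly.

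There is no real obstacle here: the content lies entirely in the standard equivalence between non-amenability and the failure of the Følner condition, which we are permitted to quote. The only point requiring a moment's attention is the bookkeeping of matching the precise form in which the cited proposition is stated (symmetric difference versus one-sided difference, and whether $1_\Gamma$ is assumed to belong to the distinguished finite set) with the one-sided inequality $|BK-K|\geq\delta|K|$ that is used in the sequel; as the computation above shows, each such reformulation is a one-line argument.
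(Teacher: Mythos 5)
Your argument is correct and is essentially the paper's: the paper states this lemma with no proof beyond the citation of the F{\o}lner characterization in \cite[Proposition 4.7.1]{Ceccherini-SCoornaert23}, and your derivation just makes explicit the one-line bookkeeping (adding $1_\Gamma$ to $B$ or passing from $|BK|\geq\lambda|K|$ to $|BK-K|\geq(\lambda-1)|K|$) needed to match the stated one-sided form. Nothing further is required.
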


\begin{thm}\label{non alomst finiteness}
    Let $\cG$ be an ample groupoid $\cG$ with compact unit space, $\Gamma$ a non-amenable group. Then the twisted topological groupoid ${\Gamma}\cG\rtimes \Gamma$ is an ample groupoid with compact unit space that is not almost finite.
\end{thm}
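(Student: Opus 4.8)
The plan is to argue by contradiction: suppose $\Gamma\cG\rtimes\Gamma$ were almost finite, and derive a violation of the non-amenable counting Lemma \ref{counting in nonamenable group}. The key point is that almost finiteness must be tested against a well-chosen compact set $C\subseteq \Gamma\cG\rtimes\Gamma$ whose ``effect'' on an elementary subgroupoid $\cK$ forces $\cK$-orbits to behave like finite subsets of $\Gamma$ under translation by a finite set $B$. Concretely, fix the finite set $B\subseteq\Gamma$ and $\delta>0$ from Lemma \ref{counting in nonamenable group}. Pick any point $x_0\in\cG^{(0)}$, a clopen neighborhood basis element, or even just use a fixed unit, and let $\bar x\in\Gamma\cG^{(0)}$ be the constant configuration $\bar x(t)=x_0$. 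The candidate compact set will be $C:=\{(\bar x,\beta):\beta\in B\}\cup\{(\bar x, 1_\Gamma)\}$, i.e.\ a finite set of group elements sitting over the single unit $\bar x$; this is compact since $B$ is finite and $\{\bar x\}$ is a point (here we do not even need $\cG^{(0)}$ to be a single point — we just restrict attention to the fibre over $\bar x$). The heart of the matter is to understand, for an elementary subgroupoid $\cK\subseteq\Gamma\cG\rtimes\Gamma$, what the orbit $\cK\bar x$ and the set $C\cK\bar x$ look like.

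Next I would analyze the structure of $\cK\bar x$. Since $\cK$ is compact open and $\cK^{(0)}=(\Gamma\cG\rtimes\Gamma)^{(0)}$, every element of $\cK$ lies in finitely many basic bisections, so there is a \emph{finite} subset $F\subseteq\Gamma$ with the property that all elements of $\cK$ project (under $\pi\colon\Gamma\cG\rtimes\Gamma\to\Gamma$) into $F$; by compactness $F$ can be taken finite and, enlarging, symmetric and containing $1_\Gamma$. Thus $\cK\bar x$, which is a finite subset of the unit space, consists of configurations of the form $\gamma^{-1}\cdot(\text{something})$ with $\gamma\in F$. The crucial observation is that the $\Gamma$-coordinate is genuinely moved: multiplying an element $(f,\gamma)\in\cK$ with $\gamma\neq 1_\Gamma$ by $(\bar x,\beta)\in C$ changes the group coordinate to $\beta\gamma$, and because $\Gamma$ acts \emph{freely} on $T=\Gamma$, distinct group coordinates give configurations that differ at some coordinate $t\in T$, hence are distinct points of the unit space. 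This lets me set up an injection/surjection comparing $|C\cK\bar x|$ and $|\cK\bar x|$ with the combinatorics of $B$ acting on a finite subset $K\subseteq\Gamma$ built from $\pi(\cK)$ and the isotropy of $\bar x$ (recall from Remark \ref{never principal} that $(\bar x,\gamma)$ always lies in the isotropy bundle, which is exactly what makes the group coordinate the dominant feature of $\cK\bar x$). The upshot should be an estimate of the form $|C\cK\bar x - \cK\bar x| \geq \delta\,|\cK\bar x|$ for a suitable identification, contradicting the defining inequality of almost finiteness once $\varepsilon<\delta$.

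I would carry out the steps in this order: (1) invoke Lemma \ref{counting in nonamenable group} to fix $B,\delta$; (2) define $\bar x$ and the compact set $C$, and note $C\subseteq\Gamma\cG\rtimes\Gamma$ is compact; (3) assume almost finiteness and extract, for $\varepsilon=\delta/2$ (say), an elementary subgroupoid $\cK$ with $|C\cK x-\cK x|<\varepsilon|\cK x|$ for all $x$, in particular for $x=\bar x$; (4) show $\pi(\cK)=:K$ is a finite symmetric subset of $\Gamma$ and that the map $(f,\gamma)\mapsto\gamma$ restricted to $\cK\bar x\subseteq(\Gamma\cG\rtimes\Gamma)^{(0)}$ — more precisely to the set of elements of $\cK$ with source $\bar x$, modulo isotropy — identifies $\cK\bar x$ with a finite subset of $\Gamma$ containing $K^{-1}$-translates, using freeness of the $\Gamma$-action on $T$; (5) deduce $C\cK\bar x$ corresponds to $B\cdot(\text{that finite set})$ and apply the counting lemma to get $|C\cK\bar x-\cK\bar x|\geq\delta|\cK\bar x|>\varepsilon|\cK\bar x|$, a contradiction. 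The main obstacle I anticipate is step (4): making precise the correspondence between the orbit $\cK\bar x$ in the unit space and a finite subset of $\Gamma$ in a way that is compatible with left multiplication by $C$, since $\cK\bar x$ a priori records both the $\cG$-part and the $\Gamma$-part of configurations; the freeness of $\Gamma\curvearrowright T$ and the fact that $\bar x$ is a constant configuration (so its isotropy is all of $\Gamma$, per Remark \ref{never principal}) are precisely the features that should make the $\Gamma$-coordinate control the count, but getting the bookkeeping exactly right — especially ensuring the inequality survives passing between $\cK$ as a set of arrows and $\cK\bar x$ as a set of units — will require care.
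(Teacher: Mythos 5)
There is a genuine gap, and it lies at the very first step: your choice of the compact set $C$. You take $C=\{(\bar x,\beta):\beta\in B\cup\{1_\Gamma\}\}$ with $\bar x$ the \emph{constant} configuration. But then every element of $C$ has source \emph{and} range equal to the single unit $(\bar x,1_\Gamma)$ (this is exactly the isotropy phenomenon of Remark \ref{never principal}). Since $C\cK x$ only contains \emph{composable} products, an arrow $k\in\cK x$ can be multiplied by an element of $C$ only if $r(k)=(\bar x,1_\Gamma)$, and because $\cK$ is principal there is at most one such arrow in each fibre $\cK x$. Hence $|C\cK x-\cK x|\leq |C|=|B|+1$ for every $x$, so the ratio $|C\cK x-\cK x|/|\cK x|$ is at most $(|B|+1)/|\cK x|$ and cannot be bounded below by $\delta$ once $\cK$ has large orbits. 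Such $\cK$ do exist in the cases of interest: for $\cG=\cE_n$, the subgroupoid $\cO(\cE_n[k];t_1)\times\{1_\Gamma\}$ is an elementary subgroupoid of $\Gamma\cE_n\rtimes\Gamma$ with $|\cK x|=n^k$ at every point, so your $C$ is compatible with almost finiteness and no contradiction can be extracted. The large isotropy of $\bar x$, which you hoped would make ``the group coordinate dominant,'' is precisely what kills the interaction between $C$ and $\cK$.

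The paper avoids this by taking $C=(\Gamma\cG)^{(0)}\times B$, which is compact because the unit space is compact, and which has the crucial property that over \emph{every} unit $(\eta,1_\Gamma)$ there are exactly $|B|$ elements $(b\cdot\eta,b)$, $b\in B$, with that source; thus every arrow of $\cK x$ is hit, and left multiplication by $C$ is literally the diagonal action of $B$ on $\Gamma\cG\times\Gamma$, $(\iota,\sigma)\cdot(\kappa,\gamma)=\sigma\cdot(\kappa,\gamma)$. A second, smaller problem with your step (4) would persist even after fixing $C$: the projection $\pi\colon\cK x\to\Gamma$ is generally not injective (distinct arrows can share the same group coordinate but differ in the $\Gamma\cG$-part), so $\cK x$ cannot be identified with a single finite subset $K\subseteq\Gamma$. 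The paper instead decomposes $\cK x$ into its intersections $\cK_1,\dots,\cK_n$ with the orbits of the free diagonal $\Gamma$-action on $\Gamma\cG\times\Gamma$; each orbit is identified equivariantly with $\Gamma$, each $\cK_i$ with a finite set $K_i\subseteq\Gamma$ and $C\cK_i$ with $BK_i$, and Lemma \ref{counting in nonamenable group} is applied orbit by orbit before summing. If you replace your $C$ by the paper's and your single set $K$ by this orbit decomposition, your contradiction scheme (testing almost finiteness against $\varepsilon<\delta$) goes through; as written, it does not.
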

\begin{proof}
By Proposition \ref{from G to TGxGamma}, it suffices now to show that ${\Gamma}\cG\rtimes \Gamma$ is not almost finite. As mentioned in Observation \ref{TG is a subgroupoid}, the unit space $({\Gamma}\cG\rtimes \Gamma)^{(0)}$ can be viewed as $({\Gamma}\cG)^{(0)}$ which is equal to the infinite product $\Pi_{\Gamma}\cG^{(0)}$. We consider the finite set $B$ as in Lemma \ref{counting in nonamenable group}. Consider the compact subset $C:=({\Gamma}\cG)^{(0)}\times B$ contained in ${\Gamma}\cG\rtimes \Gamma$. We will show that for any elementary subgroupoid $\mathcal{K}$ of ${\Gamma}\cG\rtimes \Gamma$ and any $x\in ({\Gamma}\cG\rtimes \Gamma)^{(0)}$, we have 
$$
\frac{|C \mathcal{K} x - \mathcal{K} x|}{|\mathcal{K} x|}\geq\delta.
$$

 
 We now work on $\Gamma\cG\times\Gamma$, which equals  $\Gamma\cG\rtimes \Gamma$ as a set. The group $\Gamma$ acts on it by diagonal action $\gamma_1\cdot(\kappa,\gamma)=(\gamma_1\cdot\kappa,\gamma_1\cdot\gamma)$ where $\gamma_1\in \Gamma$ and $(\kappa,\gamma)\in {\Gamma}\cG\times \Gamma$. This is a free action and for $(\kappa,\gamma)\in {\Gamma}\cG\times \Gamma$, we denote the orbit of $(\kappa,\gamma)$ by $O(\kappa,\gamma)$. Then we can divide the set ${\Gamma}\cG\times \Gamma$ into the union of orbits 
$${\Gamma}\cG\times \Gamma=\bigcup_{(\kappa,\gamma)\in {\Gamma}\cG\times \Gamma}O(\kappa,\gamma)$$
Two orbits are either the same or disjoint. One can view this as an equivalence relation on ${\Gamma}\cG\times \Gamma$ that $(\kappa,\gamma)\sim (\kappa_1,\gamma_1)$ if and only if they are in the same $\Gamma$-orbit. Then for any elementary subgroupoid $\cK$,  the equivalence relation $\sim$ restricts to the subset $\mathcal{K}x\subset {\Gamma}\cG\times \Gamma$, a sub-equivalence relation on $\mathcal{K}x$. Since $\mathcal{K}$ is elementary, $|\mathcal{K}x|<\infty$, and we can divide $\mathcal{K}x$ into disjoint union of finitely many equivalence classes, denoted by $\mathcal{K}x=\mathcal{K}_1\bigsqcup\mathcal{K}_2\cdots\bigsqcup\mathcal{K}_n$. Here we assume that $\mathcal{K}_i$ is non-empty, for $i\in\{1,\dots,n\}$ and by picking one element $(\kappa_i,\gamma_i)$ in each $\mathcal{K}_i$, we have $\mathcal{K}_i=\mathcal{K}x\cap O(\kappa_i,\gamma_i)$ and $\mathcal{K}x=\mathcal{K}x\cap(O(\kappa_1,\gamma_1)\bigsqcup O(\kappa_2,\gamma_2)\cdots\bigsqcup O(\kappa_n,\gamma_n))$.

The element in $C$ has the form $(\iota,\sigma)$ where $\iota:\Gamma\rightarrow\cG^{(0)}$ is a map from $\Gamma$ to $\cG^{(0)}$ and $\sigma\in B$. Applying the source map to $(\iota,\sigma)$, we have $s(\iota,\sigma)=(\sigma^{-1}\cdot\iota,1_\Gamma)$ where $1_\Gamma$ is the identity of $\Gamma$. Then for an arbitrary element $(\eta,1_\Gamma)\in ({\Gamma}\cG\rtimes \Gamma)^{(0)}$, there are exactly $|B|$ elements $(b\cdot\eta,b)$ for $b\in B$ in $C$ whose source is equal to $(\eta,1_\Gamma)$. And the composition of elements in $C$ and $\mathcal{K}$ coincides with the action of $\Gamma$ on ${\Gamma}\cG\times \Gamma$, i.e., \[(\iota,\sigma)\cdot(\kappa,\gamma)=(\iota\cdot(\sigma\cdot\kappa),\sigma\cdot \gamma)=(\sigma\cdot\kappa,\sigma\cdot \gamma)=\sigma\cdot(\kappa,\gamma)\] holds for $(\iota,\sigma)\in C$ and $(\kappa,\gamma)\in \mathcal{K}$. Let $(\bar{\kappa}_i,\bar{\gamma}_i)$ be an element in $\mathcal{K}_i$, then there are exactly $|B|$ elements in $C$ such that their sources all are equal to the range of $(\bar{\kappa}_i,\bar{\gamma}_i)$. In fact, one has $C\cdot(\bar{\kappa}_i,\bar{\gamma}_i) = \{(b\cdot\bar{\kappa}_i,b\cdot\bar{\gamma}_i)\mid b\in B\}$ which implies that $C\cdot(\bar{\kappa}_i,\bar{\gamma}_i) \subset O(\kappa_i,\gamma_i)$ and $C\cdot\mathcal{K}_i \subset O(\kappa_i,\gamma_i)$. Since for distinct $i$ and $j$ in $\{1,2,\dots,n\}$, we have $O(\kappa_i,\gamma_i)\cap O(\kappa_j,\gamma_j)=\emptyset$, then $C\cdot\mathcal{K}_i\cap \mathcal{K}_j=\emptyset$ and $C\mathcal{K}_i-\mathcal{K}x=C\mathcal{K}_i-\mathcal{K}_i$.

Since the action of $\Gamma$ on itself is free, we have a canonical way to identify the orbit $O(\kappa,\gamma)$ with the group $\Gamma$ by identifying $(\kappa,\gamma)\in O(\kappa_i,\gamma_i)$ with $\gamma\in \Gamma$. For each $i\in\{1,\dots,n\}$, $\mathcal{K}_i=\mathcal{K}x\cap O(\kappa_i,\gamma_i)$ is one to one correspondence to a finite subset denoted by $K_i\subset \Gamma$, and $C\mathcal{K}_i$ is one to one correspondence to $BK_i\subset \Gamma$. So we have $$|C\mathcal{K}_i-\mathcal{K}x|=|C\mathcal{K}_i-\mathcal{K}_i|=|BK_i-K_i|$$

Applying lemma \ref{counting in nonamenable group}, we have 
\begin{align*}
    |C\mathcal{K}x-\mathcal{K}x|&=|C(\mathcal{K}_1\bigsqcup\mathcal{K}_2\cdots\bigsqcup\mathcal{K}_n)-\mathcal{K}x|\\
    &=|(C\mathcal{K}_1-\mathcal{K}x)\bigsqcup(C\mathcal{K}_2-\mathcal{K}x)\cdots\bigsqcup(C\mathcal{K}_n-\mathcal{K}x)|\\
    &=|(C\mathcal{K}_1-\mathcal{K}_1)\bigsqcup(C\mathcal{K}_2-\mathcal{K}_2)\cdots\bigsqcup(C\mathcal{K}_n-\mathcal{K}_n)|\\
    &= |C\mathcal{K}_1-\mathcal{K}_1|+|C\mathcal{K}_2-\mathcal{K}_2|+\cdots+|C\mathcal{K}_n-\mathcal{K}_n|\\
    &= |BK_1-K_1|+|BK_2-K_2|+\cdots+|BK_n-K_n|\\
    &\geq \delta(|K_1|+|K_2|+\cdots+|K_n|)\\
    &=\delta(|\mathcal{K}_1|+|\mathcal{K}_2|+\cdots+|\mathcal{K}_n|)\\
    &=\delta|\mathcal{K}x|
\end{align*}

\end{proof}

Next, we consider the pure infiniteness of the twisted topological groupoids. We will show that if we let the groupoid $\cG$ be an almost finite groupoid, then the twisted topological groupoid can never be  purely infinite.

Recall that when a groupoid has invariant probability measure, then by definition it can not be purely infinite. But for an almost finite groupoid, there always exists an invariant measure.

\begin{lem}\label{a.f. groupoid has invariant mesure}
    {\cite[Lemma 6.5.]{Matui2012}} Let $\cG$ be an almost finite groupoid. Then there exists a $\cG$-invariant  measure $\mu$ on $\cG^{(0)}$. 
\end{lem}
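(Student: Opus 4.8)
Lemma (Matui): Let $\cG$ be an almost finite groupoid. Then there exists a $\cG$-invariant measure $\mu$ on $\cG^{(0)}$.

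Let me think about how to prove this.

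The plan is to carry out the usual F\o lner-type averaging argument, adapted to groupoids. First I would exhaust $\cG$ by compact open sets: since $\cG$ is ample (in particular second countable and locally compact), it is a countable increasing union $\cG=\bigcup_k C_k$ of compact open subsets, and I would arrange that each $C_k$ is symmetric ($C_k=C_k^{-1}$) and contains $\cG^{(0)}$. For each $k$, applying almost finiteness to the compact set $C_k$ and $\varepsilon=1/k$ produces an elementary subgroupoid $\cK_k\subseteq\cG$ with $|C_k\cK_k x\setminus\cK_k x|<\tfrac1k\,|\cK_k x|$ for every $x\in\cG^{(0)}$. Now fix one point $x\in\cG^{(0)}$. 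Because $\cK_k$ is elementary (hence principal), the range map restricts to a bijection from $\cK_k x=\{g\in\cK_k:s(g)=x\}$ onto the $\cK_k$-orbit $O_k\subseteq\cG^{(0)}$ of $x$, which is finite and non-empty; put $N_k=|O_k|=|\cK_k x|$ and let $\mu_k=\tfrac1{N_k}\sum_{y\in O_k}\delta_y$, a Borel probability measure on the compact metrizable space $\cG^{(0)}$. By weak$^*$ compactness I would pass to a subsequence with $\mu_k\to\mu$, and the goal is to show $\mu\in M(\cG)$.

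Since $\cG$ is ample, it has a basis of compact open bisections, so I would first reduce to checking $\mu(r(V))=\mu(s(V))$ only for compact open bisections $V$: an arbitrary open bisection $U$ is an increasing union of compact open bisections (pull back along the homeomorphism $r|_U$ an exhaustion of the open set $r(U)$ by clopen subsets), so the general identity follows by continuity of $\mu$ from below. For a compact open bisection $V$ the sets $r(V)$ and $s(V)$ are clopen, hence $\mu(r(V))$ and $\mu(s(V))$ are the limits of $\mu_k(r(V))$ and $\mu_k(s(V))$ along the chosen subsequence; so it suffices to prove $|\mu_k(r(V))-\mu_k(s(V))|\to 0$. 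A short bookkeeping check, using that $V$ is a bisection (so factorizations $g=v\gamma$ with $v\in V$, $\gamma\in\cK_k x$ are unique), identifies $N_k\mu_k(s(V))=|O_k\cap s(V)|$ with $|V\cK_k x|$, where $V\cK_k x:=\{g\in V\cK_k:s(g)=x\}$, and similarly $N_k\mu_k(r(V))=|O_k\cap r(V)|$ with $|V^{-1}\cK_k x|$.

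The crux is then the estimate
\[
\bigl|\,|O_k\cap s(V)|-|O_k\cap r(V)|\,\bigr|\le\tfrac1k\,N_k
\]
for all $k$ large enough that $V\cup V^{-1}\subseteq C_k$. For one of the two inequalities I would argue: among the elements $g=v\gamma$ of $V\cK_k x$, those that happen to lie in $\cK_k$ form a subset $S$ with $r(S)\subseteq O_k\cap r(V)$ (indeed such $g$ lies in $\cK_k x$, so $r(g)=r(v)\in O_k\cap r(V)$), and since $r$ is injective on $V\cK_k x$ we get $|O_k\cap r(V)|\ge|S|=|V\cK_k x|-|V\cK_k x\setminus\cK_k x|$; as $V\subseteq C_k$ forces $V\cK_k x\setminus\cK_k x\subseteq C_k\cK_k x\setminus\cK_k x$, the F\o lner bound gives $|O_k\cap r(V)|\ge|O_k\cap s(V)|-\tfrac1k N_k$. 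Running the same argument with $V^{-1}$ in place of $V$ (using $V^{-1}\subseteq C_k$ and $s(V^{-1})=r(V)$) yields the reverse inequality, and dividing by $N_k$ gives $|\mu_k(r(V))-\mu_k(s(V))|\le\tfrac1k\to0$, which finishes the argument.

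The routine ingredients are the exhaustion of $\cG$, the weak$^*$ compactness of the space of probability measures on $\cG^{(0)}$, and the reduction to compact open bisections; I expect the only genuine obstacle to be the displayed estimate. Almost finiteness controls only the single quantity $|C_k\cK_k x\setminus\cK_k x|$, and the work lies in converting this into comparability of the two orbit-counts $|O_k\cap s(V)|$ and $|O_k\cap r(V)|$ — which is exactly why both $V$ and its inverse must be absorbed into $C_k$, i.e.\ why the exhaustion is taken symmetric.
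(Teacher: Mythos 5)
Your argument is correct: the orbit-counting identification $|O_k\cap s(V)|=|V\cK_k x|$, the F\o lner-type estimate applied to both $V$ and $V^{-1}$ inside a symmetric compact exhaustion, and the weak$^*$ limit together give a valid proof, and the reduction from open to compact open bisections is handled properly. The paper itself gives no proof but cites Matui's Lemma 6.5, whose proof is essentially this same averaging-over-elementary-subgroupoids argument, so your proposal matches the intended route.
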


With this Lemma, we can prove the following Theorem.

\begin{thm}\label{non-p.i.}
    Let $\cG$ be an almost finite groupoid. Let $\Gamma$ be a group acting faithfully on a countable set $T$. Then the twisted topological groupoid $T\cG\rtimes\Gamma$ has a $(T\cG\rtimes\Gamma)$-invariant measure. In particular, the  groupoid $T\cG\rtimes\Gamma$ is not purely infinite.
\end{thm}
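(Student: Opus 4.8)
The plan is to exhibit an explicit invariant probability measure on the unit space of $T\cG\rtimes\Gamma$ as a countable product, and then invoke the fact that a groupoid with an invariant probability measure is never purely infinite.

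First I would apply Lemma~\ref{a.f. groupoid has invariant mesure}: since $\cG$ is almost finite it carries a $\cG$-invariant probability measure $\mu$ on $\cG^{(0)}$. As $T$ is countable, the product measure $\nu:=\bigotimes_{t\in T}\mu$ is a well-defined Borel probability measure on $\prod_T\cG^{(0)}$, which by Remark~\ref{unit space of TG} and Observation~\ref{TG is a subgroupoid} is precisely the unit space $(T\cG\rtimes\Gamma)^{(0)}$ with its product topology. Note that for Borel sets $A_1,\dots,A_k\subseteq\cG^{(0)}$ and pairwise distinct $t_1,\dots,t_k\in T$ one has $\nu\bigl(\mathcal{O}(A_1,\dots,A_k;t_1,\dots,t_k)\bigr)=\prod_{i=1}^k\mu(A_i)$, since every unspecified coordinate contributes a factor $\mu(\cG^{(0)})=1$.

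The heart of the argument is to verify that $\nu$ is $(T\cG\rtimes\Gamma)$-invariant. Because $\cG$ is ample, its compact open bisections form a basis, and hence — exactly as in the proof of Proposition~\ref{from G to TGxGamma} — the sets $\sigma=\mathcal{O}(B_1,\dots,B_k;t_1,\dots,t_k)\times\{\gamma\}$ with the $B_i$ compact open bisections of $\cG$ and $\gamma\in\Gamma$ form a basis of compact open bisections of $T\cG\rtimes\Gamma$. A routine disjointification shows that it suffices to check the defining equality of Definition~\ref{defn:invariant measure} on such basic $\sigma$: an arbitrary open bisection is a countable disjoint union of compact open bisections (compact open subsets of the Hausdorff totally disconnected groupoid $T\cG\rtimes\Gamma$ form a Boolean algebra), each of which is in turn a finite disjoint union of basic ones, and $s,r$ are injective on a bisection so the corresponding image sets are disjoint as well. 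Now fix a basic $\sigma$ as above. Using $r_\Gamma(f,\gamma)=(r(f),1_\Gamma)$ and $s_\Gamma(f,\gamma)=(\gamma^{-1}\cdot s(f),1_\Gamma)$ together with the identity $[\gamma^{-1}\cdot h](t)=h(\gamma t)$ and the fact that $s|_{B_i},r|_{B_i}$ are homeomorphisms onto $s(B_i),r(B_i)$, a direct computation gives
\[
r(\sigma)=\mathcal{O}(r(B_1),\dots,r(B_k);t_1,\dots,t_k),\qquad
s(\sigma)=\mathcal{O}(s(B_1),\dots,s(B_k);\gamma^{-1}t_1,\dots,\gamma^{-1}t_k),
\]
the indices in the second set again being pairwise distinct. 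Combining this with the computation of $\nu$ on basic sets and the $\cG$-invariance of $\mu$ yields
\[
\nu(s(\sigma))=\prod_{i=1}^k\mu(s(B_i))=\prod_{i=1}^k\mu(r(B_i))=\nu(r(\sigma)),
\]
so $\nu\in M(T\cG\rtimes\Gamma)$. Finally, a groupoid admitting an invariant probability measure cannot be purely infinite (as recalled just before the statement): concretely, a compact open bisection $\sigma$ with $s(\sigma)=(T\cG\rtimes\Gamma)^{(0)}$ and $r(\sigma)\subseteq V$ would force $1=\nu(s(\sigma))=\nu(r(\sigma))\le\nu(V)$ for every nonempty clopen $V$, which is impossible whenever $\cG^{(0)}$ has more than one point. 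Hence $T\cG\rtimes\Gamma$ is not purely infinite.

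I expect the only genuine work to be the source/range computation for $\sigma$ — in particular tracking correctly that the $\Gamma$-twist relabels the constrained coordinates of $s(\sigma)$ from $\{t_i\}$ to $\{\gamma^{-1}t_i\}$ — together with the bookkeeping in the reduction to basic bisections; once $s(\sigma)$ and $r(\sigma)$ are identified, invariance of $\nu$ is immediate from the product structure and the invariance of $\mu$.
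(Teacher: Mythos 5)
Your proposal is correct and follows essentially the same route as the paper: invoke Lemma~\ref{a.f. groupoid has invariant mesure} to get $\mu$, take the product measure on $\prod_T\cG^{(0)}$, compute $s_\Gamma$ and $r_\Gamma$ of the basic clopen bisections $\mathcal{O}(B_1,\dots,B_k;t_1,\dots,t_k)\times\{\gamma\}$ (with the $\Gamma$-twist relabelling the constrained coordinates to $\gamma^{-1}t_i$), and conclude invariance and hence non--pure-infiniteness. The only difference is that you also spell out the disjointification argument reducing general open bisections to basic ones (and the caveat about a one-point unit space), steps the paper leaves implicit, so this is a slightly more complete write-up of the same proof rather than a different approach.
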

\begin{proof}
    By Lemma \ref{a.f. groupoid has invariant mesure}, there exists a $\cG$-invariant measure $\mu$ on $\cG^{(0)}$. Since the groupoid $\cG$ is also an ample groupoid, there exists $\cB$ consisting of clopen bisections that form a topological basis for $\cG$.  And for every $B\in \cB$, we have $\mu(s(B))=\mu(r(B))$.

    As we explain in Remark \ref{unit space of TG} and Observation \ref{TG is a subgroupoid}, The subspace topology on $(T\mathcal{G}\rtimes\Gamma)^{(0)}=\Pi_T \cG^{(0)} \times\{1_\Gamma\}$ is homeomorphic to the product topology on $\Pi_T \cG^{(0)}$. Denote $\mu_T$ by the product measure on $\Pi_T \cG^{(0)}$ getting from $\mu$. We show that $\mu_T$ is an $(T\cG\rtimes\Gamma)$-invariant measure for the twisted topological groupoid $T\cG\rtimes\Gamma$.

    Let $T\cG_\Gamma$ be the collection of clopen bisections of the form $\cO(B_1,B_2,\dots,B_k;t_1,\cdots,t_k)\times\{\gamma\}$ for $B_i\in \cB$ and $\gamma \in \Gamma$. Then $T\cG_\Gamma$ also forms a topological basis for $T\cG\rtimes\Gamma$. Applying the source map $s_\Gamma$ and the range map $r_\Gamma$ to one $\cO(B_1,B_2,\dots,B_k;t_1,\cdots,t_k)\times\{\gamma\}$ in $T\cG_\Gamma$, we have that 
$$s_\Gamma(\mathcal{O}(B_1,\dots,B_k;t_1,\dots,t_k)\times\{\gamma\})=\mathcal{O}(s(B_1),\dots,s(B_k);\gamma^{-1} \cdot  t_1,\dots,\gamma^{-1}\cdot t_k) \times\{1_\Gamma\}\subseteq \prod_\Gamma \cG^{(0)} \times\{1_\Gamma\}$$
and 
$$r_\Gamma(\mathcal{O}(B_1,\dots,B_k;t_1,\dots,t_k)\times\{\gamma\})=\mathcal{O}((r(B_1),\dots,r(B_k); t_1,\dots,t_k) \times\{1_\Gamma\}\subseteq\prod_\Gamma \cG^{(0)} \times\{1_\Gamma\}.$$

Note that the clopen set $\mathcal{O}(s(B_1),\dots,s(B_k);\gamma^{-1} \cdot  t_1,\dots,\gamma^{-1}\cdot t_k)$ is homeomorphic to $s(B_1)\times \cdots \times s(B_k)\times \Pi_{t\in T-\{\gamma^{-1} \cdot  t_1,\dots,\gamma^{-1}\cdot t_k\}}\cG^{(0)}$ and $\mathcal{O}((r(B_1),\dots,r(B_k); t_1,\dots,t_k)$ is homeomorphic to $r(B_1)\times \cdots \times r(B_1)\times \Pi_{t\in T-\{t_1,\dots,t_k\}}\cG^{(0)}$, which means that when we apply the product measure $\mu _T$ on the source and range, we have that 
$$\mu_T(\mathcal{O}(s(B_1),\dots,s(B_k);\gamma^{-1} \cdot  t_1,\dots,\gamma^{-1}\cdot t_k))=\mu(s(B_1))\times\cdots\times\mu(s(B_k))\times1$$
and
$$\mu_T(\mathcal{O}(r(B_1),\dots,r(B_k); t_1,\dots,t_k))=\mu(r(B_1))\times\cdots\times\mu(r(B_k))\times1.$$

So we have $\mu_T(\mathcal{O}(s(B_1),\dots,s(B_k);\gamma^{-1} \cdot  t_1,\dots,\gamma^{-1}\cdot t_k))=\mu_T(\mathcal{O}(r(B_1),\dots,r(B_k); t_1,\dots,t_k))$ which implies that $T\mathcal{G}\rtimes\Gamma$ admits an invariant measure. In this case $T\mathcal{G}\rtimes\Gamma$ can not be purely infinite.
\end{proof}

Let $\cG$ be an ample groupoid with compact unit space. Let $\Gamma$ be a group acting on a countable set $T$. We consider $M(T\cG\rtimes\Gamma)$, the set of all invariant measures of the twisted topological groupoid $T\cG\rtimes\Gamma$.

If $M(\cG)$ is non-empty, let $\mu \in M(\cG)$ be an invariant measure on $\cG^{(0)}$, then we have that the product measure $\mu_T$ is also an invariant measure on $(T\cG\rtimes\Gamma)^{(0)}$, by the proof of Theorem \ref{non-p.i.}. On the other hand, when there is only one $\cG$-invariant measure on the unit space $\cG^{(0)}$, we have that there is also a unique $(T\cG\rtimes\Gamma)$-invariant measure on $(T\cG\rtimes\Gamma)^{(0)}$.

\begin{thm}\label{unique invariant measure}
    Let $\cG$ be an ample groupoid with compact unit space. Let $\Gamma$ be a group that acts on a countable set $T$. If $|M(\cG)|=1$, then $|M(T\cG\rtimes\Gamma)|=1$.
\end{thm}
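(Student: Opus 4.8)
The plan is to identify $M(T\cG\rtimes\Gamma)$ with the singleton $\{\mu_T\}$, where $\mu$ is the unique element of $M(\cG)$ and $\mu_T$ is the corresponding product measure on $\prod_T\cG^{(0)} = (T\cG\rtimes\Gamma)^{(0)}$. Nonemptiness is already in hand: the computation in the proof of Theorem \ref{non-p.i.} shows that $\mu_T \in M(T\cG\rtimes\Gamma)$. So the point is uniqueness: any $\nu \in M(T\cG\rtimes\Gamma)$ coincides with $\mu_T$.

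The mechanism is a coordinate-by-coordinate ``peeling'' that uses only invariance of $\nu$ under the untwisted open bisections $\cO(B_1,\dots,B_k;t_1,\dots,t_k)\times\{1_\Gamma\}$, where each $B_i$ is a clopen bisection of $\cG$; these are open bisections of $T\cG\rtimes\Gamma$ by the same reasoning as in the proof of Proposition \ref{from G to TG}, together with the source and range formulas recorded in the proof of Theorem \ref{non-p.i.} (specialized to $\gamma = 1_\Gamma$). Write $p_t\colon \prod_T\cG^{(0)}\to\cG^{(0)}$ for the coordinate-$t$ projection. I would prove, by induction on $k\ge 0$, that
\[
\nu\bigl(\cO(A_1,\dots,A_k;t_1,\dots,t_k)\bigr) = \prod_{i=1}^{k}\mu(A_i)
\]
for all distinct $t_1,\dots,t_k \in T$ and all clopen $A_1,\dots,A_k \subseteq \cG^{(0)}$; the base case $k=0$ is just $\nu\bigl(\prod_T\cG^{(0)}\bigr) = 1$. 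For the inductive step, fix distinct $t_1,\dots,t_k$ and clopen $A_2,\dots,A_k$, put $E := \cO(A_2,\dots,A_k;t_2,\dots,t_k)$ (a cylinder constraining only the coordinates $t_2,\dots,t_k$), and define a finite positive Borel measure on $\cG^{(0)}$ by $\lambda(D) := \nu\bigl(p_{t_1}^{-1}(D)\cap E\bigr)$, the pushforward of $\nu|_E$ along $p_{t_1}$; its total mass $\nu(E)$ equals $\prod_{i=2}^{k}\mu(A_i)$ by the inductive hypothesis. For any clopen bisection $B$ of $\cG$, the open bisection $\cO(B,A_2,\dots,A_k;t_1,\dots,t_k)\times\{1_\Gamma\}$ has range $p_{t_1}^{-1}(r(B))\cap E$ and source $p_{t_1}^{-1}(s(B))\cap E$ (the $A_i\subseteq\cG^{(0)}$ being fixed by $s$ and $r$), so invariance of $\nu$ forces $\lambda(s(B)) = \lambda(r(B))$. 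Since clopen bisections form a basis of $\cG$, it follows that $\lambda$ is a $\cG$-invariant finite measure; as $|M(\cG)| = 1$, rescaling gives $\lambda = \lambda(\cG^{(0)})\,\mu = \bigl(\prod_{i=2}^{k}\mu(A_i)\bigr)\mu$, the case $\lambda(\cG^{(0)}) = 0$ being trivial. Evaluating at $D = A_1$ closes the induction.

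Finally, the cylinders $\cO(A_1,\dots,A_k;t_1,\dots,t_k)$ with clopen $A_i \subseteq \cG^{(0)}$ are closed under finite intersection (by the padding-and-intersecting computation of Lemma \ref{lem:top}) and generate the Borel $\sigma$-algebra of $\prod_T\cG^{(0)}$, because $\cG^{(0)}$ is second countable and totally disconnected. Since $\nu$ and $\mu_T$ are probability measures agreeing on this $\pi$-system, the $\pi$--$\lambda$ theorem gives $\nu = \mu_T$. Hence $M(T\cG\rtimes\Gamma) = \{\mu_T\}$ and $|M(T\cG\rtimes\Gamma)| = 1$.

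I expect the delicate points to be bookkeeping rather than ideas: one must verify that the conditional pushforwards $\lambda$ are genuine $\cG$-invariant \emph{finite} measures, so that uniqueness of the invariant \emph{probability} measure can be applied after normalization (with the degenerate zero-mass case treated separately), and one must make the passage from a generating $\pi$-system to global equality precise. It is worth recording that the $\Gamma$-twisting plays no role in the uniqueness argument — only the $\gamma = 1_\Gamma$ part of the invariance of $\nu$ is used — although one also obtains for free that $\nu$ is invariant under the coordinate-permutation action of $\Gamma$ on $\prod_T\cG^{(0)}$, by applying invariance to the bisections $A\times\{\gamma\}$ with $A$ a cylinder.
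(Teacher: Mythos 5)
Your proposal is correct and follows essentially the same route as the paper: induction on the number of constrained coordinates, applying invariance of $\nu$ to bisections of the form $\cO(B,A_2,\dots,A_k;t_1,\dots,t_k)\times\{1_\Gamma\}$ so that the conditional slice measure on the $t_1$-coordinate is $\cG$-invariant and hence equals $\mu$ by uniqueness. Your treatment is a bit more careful than the paper's (explicit handling of the zero-mass case, which the paper's normalization $a^{-1}$ silently assumes away, and the $\pi$--$\lambda$ step passing from cylinders to all Borel sets), but these are refinements of the same argument rather than a different proof.
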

\begin{proof}
    Let $\nu$ be the unique invariant probability measure of $\cG$. It suffices to prove that for any $(T\cG\rtimes\Gamma)$-invariant measure $\nu$ on $(T\cG\rtimes\Gamma)^{(0)}$, We have that $\nu=\mu_T$. Since $\nu$ is a Borel measure, for all open sets of the form $\cO(U_1,\dots,U_k;t_1\dots,t_k)\times\{1_\Gamma\}$ in $(T\cG\rtimes\Gamma)^{(0)}$, we show that 
    \begin{align}\label{same measure}
        \nu(\cO(U_1,\dots,U_k;t_1\dots,t_k)\times\{1_\Gamma\})=\mu_T(\cO(U_1,\dots,U_k;t_1\dots,t_k)\times\{1_\Gamma\})
    \end{align}

    We prove this by induction on $k$. When $k=0$, we have that $\nu((T\cG\rtimes\Gamma)^{(0)})=\mu_T((T\cG\rtimes\Gamma)^{(0)})=1$. Suppose that (\ref{same measure}) holds for $k=n-1$. Let $\cO(U_1,\dots,U_n;t_1\dots,t_n)\times\{1_\Gamma\}$ be an arbitrary open set of $(T\cG\rtimes\Gamma)^{(0)}$ with $U_i$ open in $\cG^{(0)}$ for $i\in\{1,\dots,n\}$. Then $\cO(U_1,\dots,U_{n-1};t_1\dots,t_{n-1})\times\{1_\Gamma\}$ is an open set that satisfies the inductive assumption and we denote the $\nu$ measure of this set by $a$. We have that 
\begin{align*}
    \nu(\cO(U_1,\dots,U_{n-1};t_1\dots,t_{n-1})\times\{1_\Gamma\})
    &=\mu_T(\cO(U_1,\dots,U_{n-1};t_1\dots,t_{n-1})\times\{1_\Gamma\})\\
    &=\mu(U_1)\times\dots\times\mu(U_{n-1})\\
    &=a
\end{align*}
Now, we define a new probability measure $\mu_n$ on $\cG^{(0)}$. For any open set $W$ of $\cG^{(0)}$, define the measure of $W$ by 
\begin{align} \label{mu_n}
    \mu_n(W):=a^{-1}\cdot\nu(\cO(U_1,\dots,U_{n-1},W;t_1\dots,t_{n-1},t_n)\times\{1_\Gamma\})
\end{align}
 One can see that $\mu_n$ is a $\cG$-invariant probability measure on $\cG^{(0)}$, since every open bisection $\sigma$ on $\cG$ corresponds to an open bisection $\cO(U_1,\dots,U_{n-1},\sigma;t_1\dots,t_{n-1},t_n)\times\{1_\Gamma\}$ in $T\cG\rtimes\Gamma$. Then by the uniqueness of $\cG$-invariant measure on $\cG^{(0)}$, we have that $\mu_n=\mu$. Then taking $W=U_n$ in (\ref{mu_n}) together with (\ref{same measure}), we have that
 \begin{align*}
     \nu(\cO(U_1,\dots,U_{n-1},U_n;t_1\dots,t_{n-1},t_n)\times\{1_\Gamma\})&=a\times \mu_n(U_n)\\&=a\times\mu(U_n)\\&=\mu(U_1)\times\dots\times\mu(U_{n-1})\times\mu(U_n)\\&=\mu_T(\cO(U_1,\dots,U_{n-1},U_n;t_1\dots,t_{n-1},t_n)\times\{1_\Gamma\})  
 \end{align*}
\end{proof}
\begin{lem}\label{|M(E_n)|=1}
    Let $\cE_n$ be the approximately finite groupoid in Example \ref{ex:SFT}. Then $|M(\cE_n)|=1$.
\end{lem}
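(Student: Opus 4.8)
The plan is to show that every $\cE_n$-invariant probability measure on the unit space $X:=\{0,1,\dots,n-1\}^{\bN}$ is forced to agree with the uniform product measure by evaluating it on basic cylinders. For $k\in\bN$ and a word $a\colon\{i\in\bN:i<k\}\to\{0,\dots,n-1\}$, write $[a]:=\{x\in X:x_i=a_i \text{ whenever } i<k\}$; for fixed $k$ these cylinders form a finite clopen partition of $X$, and as $k$ varies they form a basis of $X$, hence a $\pi$-system generating the Borel $\sigma$-algebra and containing $X$. I claim that for $\mu\in M(\cE_n)$ and fixed $k$, all cylinders $[a]$ of that depth have the same $\mu$-measure; since there are finitely many of them, this common value equals the reciprocal of their number and is therefore independent of $\mu$. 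By the standard uniqueness theorem for finite measures on a generating $\pi$-system it follows that any two elements of $M(\cE_n)$ coincide, so $|M(\cE_n)|\le 1$. Meanwhile $M(\cE_n)\ne\emptyset$: as noted in Example~\ref{ex:SFT}, $\cE_n=\bigcup_k\cE_n[k]$ is approximately finite, hence almost finite — any compact $C\subseteq\cE_n$ is contained in some $\cE_n[k]$ since the $\cE_n[k]$ are open and increasing, whence $C\cE_n[k]x\subseteq\cE_n[k]x$ for every $x$ — so Lemma~\ref{a.f. groupoid has invariant mesure} provides an invariant probability measure. Together these give $|M(\cE_n)|=1$.

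To prove the claim I use the prefix-exchange bisections. Fix $k$ and two words $a,b$ on $\{i\in\bN:i<k\}$, and set
\[
U_{a,b}:=\{(y,x)\in\cE_n[k]:y_i=b_i \text{ and } x_i=a_i \text{ whenever } i<k\}=([b]\times[a])\cap\cE_n[k].
\]
Since $\cE_n[k]$ is an elementary subgroupoid of $\cE_n$ whose topology is the relative topology from $X\times X$, the set $U_{a,b}$ is clopen in $\cE_n[k]$ and hence compact open in $\cE_n$. An element of $\cE_n[k]$ with prescribed values on $\{i<k\}$ is completely determined by its common tail $(x_i)_{i\geq k}$, so $s|_{U_{a,b}}\colon U_{a,b}\to[a]$ and $r|_{U_{a,b}}\colon U_{a,b}\to[b]$ are continuous bijections between compact Hausdorff spaces, hence homeomorphisms. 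Thus $U_{a,b}$ is a compact open bisection with $s(U_{a,b})=[a]$ and $r(U_{a,b})=[b]$, and for $\mu\in M(\cE_n)$ the invariance condition gives $\mu([a])=\mu(s(U_{a,b}))=\mu(r(U_{a,b}))=\mu([b])$, which is exactly the claim.

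The two steps deserving a word of care are the verification that $U_{a,b}$ is genuinely a compact open bisection — which uses the Remark following the definition of an elementary subgroupoid — and the invocation of the $\pi$-system uniqueness theorem for Borel measures on the Cantor space; neither presents a real obstacle, so the main content of the lemma is just this unwinding of the definitions.
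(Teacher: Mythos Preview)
Your proof is correct and follows essentially the same route as the paper's: both show that invariance forces all depth-$k$ cylinders to have equal measure via the prefix-exchange bisections, which determines $\mu$ uniquely on a generating family. You are simply more explicit than the paper in constructing $U_{a,b}$, in invoking the $\pi$-system uniqueness theorem, and in verifying $M(\cE_n)\ne\emptyset$ via almost finiteness and Lemma~\ref{a.f. groupoid has invariant mesure}.
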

\begin{proof}
    We only prove this $\cE_2$, and for $n>2$ the proof is the same. Let $\{0,1\}^{<\infty}$ be the set of all finite dyadic words and denote $|u|$ by the word length of a word $u\in\{0,1\}^{<\infty}$. The unit space $\cE_2^{(0)}=\{0,1\}^{\bN}$ has a topological basis consisting of all cylinders $C_u$ for $u\in\{0,1\}^{<\infty}$. Here $C_u$ contains all $w\in \{0,1\}^{\bN}$ such that $u$ is a prefix of $w$.

    Let $\mu$ be an invariant measure for $\cE_2$. Since for $u,v\in\{0,1\}^{<\infty}$ with $|u|=|v|$, there exists a bisection $\sigma_{u,v}\subset\cG$ such that $s(\sigma_{u,v})=C_u$ and $r(\sigma_{u,v})=C_v$. Then for $u,v\in\{0,1\}^{<\infty}$ with $|u|=|v|$, we have $\mu(C_u)=\mu(C_v)$. From $\bigsqcup_{|u|=n}C_u=\{0,1\}^{\bN}$ and $|\{u\in\{0,1\}^{<\infty}\mid |u|=n\}|=2^n$, we have $\mu(C_u)=2^{-|u|}$ for all $u\in\{0,1\}^{<\infty}$. Hence there is only one invariant measure for $\cE_2$.
\end{proof}


\begin{thm} \label{TGxGamma is essentially principal}
     Let $\cG$ be an essentially principal minimal ample groupoid with compact unit space such that $|M(\cG)|=1$.  Let $\Gamma$ be a group that acts on an infinite countable set $T$ freely. Then the twisted topological groupoid $T\cG\rtimes\Gamma$ is essentially principal.
\end{thm}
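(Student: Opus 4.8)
The plan is to use Theorem~\ref{unique invariant measure} to identify the unique invariant measure of $T\cG\rtimes\Gamma$ as an explicit product measure, and then to show directly that the set of units with non-trivial isotropy is null for it. Write $\mu$ for the unique $\cG$-invariant probability measure and $\mu_T$ for the induced product measure on $(T\cG\rtimes\Gamma)^{(0)}=\prod_T\cG^{(0)}$ (Remark~\ref{unit space of TG}). By Theorem~\ref{unique invariant measure}, $|M(T\cG\rtimes\Gamma)|=1$, and the proof of Theorem~\ref{non-p.i.} shows that $\mu_T$ is invariant, so $M(T\cG\rtimes\Gamma)=\{\mu_T\}$; hence essential principality of $T\cG\rtimes\Gamma$ is equivalent to the statement that
\[
Z:=\bigl\{\,x\in\prod_T\cG^{(0)}\ :\ (T\cG\rtimes\Gamma)^{x}_{x}\neq\{x\}\,\bigr\}
\]
is contained in a $\mu_T$-null Borel set. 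I would also record at the outset that $\mu$ is non-atomic: if $\mu(\{y\})>0$, minimality makes the $\cG$-orbit of $y$ dense while invariance forces every point of that orbit to have mass $\mu(\{y\})$, so the orbit — hence $\cG^{(0)}$ — is finite; I would set aside the genuinely degenerate one-point case (which needs a mild nondegeneracy hypothesis), the finite-but-larger case being covered by the disjoint-pairs estimate mentioned below.

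\emph{The untwisted part.} If $(f,\gamma)\in(T\cG\rtimes\Gamma)^{x}_{x}$ then $r(f)=x$ and $\gamma^{-1}\cdot s(f)=x$; unwinding the formulas for $r_\Gamma,s_\Gamma$ and for the $\Gamma$-action, this says $r(f(t))=x_t$ and $s(f(t))=x_{\gamma^{-1}t}$ for every $t\in T$. Therefore $Z\subseteq Z_0\cup\bigcup_{\gamma\neq 1_\Gamma}A_\gamma$, where $Z_0$ consists of those $x$ having some coordinate $x_t$ with non-trivial $\cG$-isotropy, and $A_\gamma:=\{x : \exists\,f\ \text{with}\ (f,\gamma)\in(T\cG\rtimes\Gamma)^{x}_{x}\}$. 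Since $\cG$ is essentially principal with respect to $\mu$, the set $N:=\{y\in\cG^{(0)}:\cG^{y}_{y}\neq\{y\}\}$ is contained in a $\mu$-null Borel set, so $Z_0=\bigcup_{t\in T}\pi_t^{-1}(N)$ is a countable union of $\mu_T$-null sets, hence $\mu_T$-null.

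\emph{The twisting part.} Fix $\gamma\neq 1_\Gamma$; freeness of $\Gamma\curvearrowright T$ gives $\gamma t\neq t$ for all $t$. For $(f,\gamma)\in(T\cG\rtimes\Gamma)^{x}_{x}$ set $S:=\{t:f(t)\notin\cG^{(0)}\}$, a finite set; for $t\notin S$ one has $f(t)\in\cG^{(0)}$, whence $x_t=r(f(t))=f(t)=s(f(t))=x_{\gamma^{-1}t}$. Thus $A_\gamma\subseteq\bigcup_{S\ \text{finite}}B_{\gamma,S}$ with $B_{\gamma,S}:=\{x : x_t=x_{\gamma^{-1}t}\ \text{for all}\ t\notin S\}$, a closed set. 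Choosing any $t_0\notin S$ (possible since $T$ is infinite), we have $t_0\neq\gamma^{-1}t_0$, so $B_{\gamma,S}$ lies in the preimage of the diagonal $\Delta\subseteq\cG^{(0)}\times\cG^{(0)}$ under $(\pi_{t_0},\pi_{\gamma^{-1}t_0})$; the pushforward of $\mu_T$ to those two coordinates is $\mu\otimes\mu$, and $(\mu\otimes\mu)(\Delta)=\int_{\cG^{(0)}}\mu(\{y\})\,d\mu(y)=0$ by non-atomicity. So each $B_{\gamma,S}$ is $\mu_T$-null; since the finite subsets of the countable set $T$ form a countable family, $A_\gamma$ is $\mu_T$-null, and then $\bigcup_{\gamma\neq 1_\Gamma}A_\gamma$ is $\mu_T$-null because $\Gamma$ is countable. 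Combining with the previous step, $Z$ lies in a $\mu_T$-null Borel set, which is exactly essential principality of $T\cG\rtimes\Gamma$.

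\emph{Where the difficulty lies.} The substantive point is the twisting part: extracting the exact constraint that a non-trivial twisted isotropy element puts on a unit $x$ (namely $\gamma$-invariance of $x$ away from a finite subset of $T$) and observing that this already confines $x$ to a null set — this is where both freeness of $\Gamma\curvearrowright T$ and non-atomicity of $\mu$ are genuinely needed. When $\mu$ has atoms, a single coordinate pair no longer suffices: since $t\mapsto\gamma^{-1}t$ is fixed-point-free, one instead extracts, inside any cofinite subset of $T$, arbitrarily many pairwise disjoint pairs $\{t_i,\gamma^{-1}t_i\}$ (the graph on $T$ with these edges has maximum degree at most $2$, hence is a disjoint union of paths and cycles, and having infinitely many edges it admits matchings of every size), so $\mu_T(B_{\gamma,S})\le(\max_y\mu(\{y\}))^m\to 0$ whenever $\cG^{(0)}$ has more than one point. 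Finally, essential principality only requires the non-free locus to sit inside a Borel null set, so the (possibly merely analytic) sets $N$ and $A_\gamma$ raise no measurability issue.
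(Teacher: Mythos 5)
Your proof is correct and follows the same core strategy as the paper's: reduce via Theorem~\ref{unique invariant measure} (together with the invariance of the product measure established in the proof of Theorem~\ref{non-p.i.}) to showing that the set of units with non-trivial isotropy is $\mu_T$-null, handle the $\gamma=1_\Gamma$ contributions coordinatewise using essential principality of $\cG$, and handle $\gamma\neq 1_\Gamma$ by noting that freeness of $\Gamma\curvearrowright T$ forces coordinate coincidences $x_t=x_{\gamma^{-1}t}$ at unit coordinates of $f$, which are null because the two-coordinate marginal of $\mu_T$ is $\mu\otimes\mu$ and the diagonal has $(\mu\otimes\mu)$-measure zero. The bookkeeping differs only mildly: the paper removes one global null set $\Delta(\prod_T\cG^{(0)})=\{\kappa:\exists\, t\neq t',\ \kappa(t)=\kappa(t')\}$ and shows that off it every isotropy element has $\gamma=1_\Gamma$, whereas you bound each $A_\gamma$ by the closed sets $B_{\gamma,S}$; these are interchangeable. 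The one genuine divergence is your treatment of atoms, and it is a real refinement: the paper asserts that minimality alone gives $\mu(\{x\})=0$, which is only valid when $\cG^{(0)}$ is infinite; your derivation of non-atomicity, the disjoint-pairs estimate covering finite unit spaces with at least two points, and your explicit flagging of the one-point case are all warranted --- indeed, when $\cG^{(0)}$ is a single point, $\cG$ is the trivial groupoid and $T\cG\rtimes\Gamma\cong\Gamma$, which is not essentially principal for non-trivial $\Gamma$, so the statement as written does require the nondegeneracy hypothesis you mention (a point the paper's proof silently passes over, harmlessly in its intended Cantor-space applications).
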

\begin{proof}
    By Theorem \ref{unique invariant measure}, we denote by $\mu$  the unique invariant probability measure of $\cG$ and denote by $\mu_T$ the unique invariant probability measure, i.e., the product measure, of $T\cG\rtimes\Gamma$. Let $\Delta(\prod_T\cG^{(0)})$ be the diagonal of $\prod_T\cG^{(0)}$, defined by 
    $$\Delta(\prod_T\cG^{(0)}):=\{\kappa\in\prod_T\cG^{(0)}\mid \exists\ t\neq t^\prime\in T, \kappa(t)=\kappa(t^\prime)\}$$

    For $t\neq t'\in T$, denote $\Delta(t,t^\prime):=\{\kappa\in\prod_T\cG^{(0)}\mid \kappa(t)=\kappa(t^\prime)\}$, Therefore, one has
    \[\Delta(\prod_T\cG^{(0)})=\bigcup_{t\neq t'\in T}\Delta(t,t^\prime).\] Since $T$ is a countable set, there are countably many such $\Delta(t,t^\prime)$. And for each $\Delta(t,t^\prime)$, we have \[\Delta(t,t^\prime)=\{(x,y)\in\cG^{(0)}\times\cG^{(0)}\mid x=y\}\times\prod_{T-\{t,t^\prime\}}\cG^{(0)}\] and thus
    \[\mu_T(\Delta(t,t^\prime))=(\mu\times \mu)(\{(x,y)\in\cG^{(0)}\times\cG^{(0)}\mid x=y\}).\] Using the minimality of $\cG$, we have $\mu(\{x\})=0$ for all $x \in\cG^{(0)}$. Hence, by Tonelli's theorem, we have $(\mu\times \mu)(\{(x,y)\in\cG^{(0)}\times\cG^{(0)}\mid x=y\})=0$, which entails $\mu_T(\Delta(\prod_T\cG^{(0)}))=0$.

Now for $\kappa\in\prod_T\cG^{(0)}-\Delta(\prod_T\cG^{(0)})$, we consider the isotropy group of $(\kappa,1_\Gamma)\in (T\cG\rtimes\Gamma)^{(0)}$. For any $(f,\gamma)\in T\cG\rtimes\Gamma$ with $s_\Gamma(f,\gamma)=(\kappa,1_\Gamma)=r_\Gamma(f,\gamma)$, we claim that $\gamma=1_\Gamma$. Since for the map $f$, there exists $t_0\in T$ such that $f(t_0)\in\cG^{(0)}$. And applying the source and range maps to $(f,\gamma)$, we have 
\begin{align*}
    s_\Gamma(f,\gamma)=(\gamma^{-1}\cdot s(f),1_\Gamma)=r_\Gamma(f,\gamma)=(r(f),1_\Gamma)=(\kappa,1_\Gamma)
\end{align*}
Then we have that 
\begin{align}\label{kappa}
\kappa(t_0)=s(f(\gamma\cdot t_0))=r(f(t_0))=f(t_0)=s(f(t_0))=\kappa(\gamma^{-1}\cdot t_0)
\end{align}
 If $\gamma\neq 1_\Gamma$, the free $\Gamma$-action implies that $\gamma\cdot t_0\neq t_0$, which contradicts to $\kappa\in\prod_T\cG^{(0)}-\Delta(\prod_T\cG^{(0)})$ by (\ref{kappa}). 

 So the elements in the isotropy group of $(\kappa,1_\Gamma)\in (T\cG\rtimes\Gamma)^{(0)}$ have the form $(f,1_\Gamma)\in T\cG\rtimes\Gamma$. Therefore, the element $(\kappa,1_\Gamma)\in (T\cG\rtimes\Gamma)^{(0)}$ has non-trivial isotropy group if and only if there exists some $t^\prime\in T$ such that $\kappa(t^\prime)$ has non-trivial isotropy group in $\cG$. Denote by $R$ the subset of $\cG^{(0)}$ with non-trivial isotropy group. Since $\cG$ is essentially principal, one has $\mu(R)=0$ and thus $\mu_T(\cO(R;t^\prime))=\mu(R)=0$. This implies that the set 
 \[\{x\in(T\cG\rtimes\Gamma)^{(0)}: \cG^x_x\neq \{x\}\}\subset\bigcup_{t\in T}\cO(R;t)\cup \Delta(\prod_T\cG^{(0)})\]
and thus has measure $0$. 
\end{proof}

\begin{proof}[Proof of Theorem \ref{main:thmb}]
    By Proposition \ref{from G to TGxGamma}, $\Gamma\cG\rtimes \Gamma$ is a minimal ample groupoid. From $|M(\cG)|=1$ and every single point has measure zero, by Theorem \ref{TGxGamma is essentially principal}, $\Gamma\cG\rtimes \Gamma$ is essentially principal. Since $\Gamma$ is not amenable, by Theorem \ref{non alomst finiteness}, $\Gamma\cG\rtimes \Gamma$ is not almost finite. The fact that $\Gamma\cG\rtimes \Gamma$ is not purely infinite follows from Theorem \ref{non-p.i.}.
\end{proof}

\begin{rem}
   The proof also implies that if  $\Gamma$ is a countable non-amenable group and if $\cG$ is an effective and minimal almost finite groupoid, then the twisted topological groupoid $\Gamma\cG\rtimes \Gamma$ is an effective minimal ample groupoid that is neither almost finite nor purely infinite.
\end{rem}

\begin{ex} \label{ex:non a.f. and non p.i.}
    Let $\Gamma$ be a non-amenable group  and $T=\Gamma$ with $\Gamma$ acting on it by left multiplication. Let $\cE$ be a minimal approximate finite groupoid on a compact unit space with a unique $\cE$-invariant probability measure, e.g., UHF groupoids  $\cE_n$ in Example \ref{|M(E_n)|=1} or certain non-UHF groupoids $\cE$ yielding simple unital AF $C^*$-algebra $C^*_r(\cE)$ as in \cite[Corollary 3.7]{Bl80}.
  Note that $\cE$ is principal as well.    
    Thus the twisted topological groupoid $\Gamma\cE\rtimes \Gamma$ is an essentially principal, minimal, ample groupoid that is neither almost finite nor purely infinite. 
\end{ex}

\begin{thm}\label{thm: non trans}
    Suppose $\cG$ is a principal almost finite non-topologically amenable groupoid. Let $\Gamma$ be a countable discrete group. Then $\Gamma \cG$ is not isomorphic to any transformation groupoid. And if $\Gamma$ is not amenable, $\Gamma\cG\rtimes\Gamma$ is an effective groupoid that is not almost finite nor purely infinite.
\end{thm}
\begin{proof}
    Let $\cG$ be an almost finite groupoid that is not topologically amenable. Then so is $\Gamma\cG$ by Proposition \ref{from G to TG} (6). Then $\Gamma\cG$ is not isomorphic to any transformation groupoid. Otherwise, suppose $\Gamma\cG\simeq X\rtimes \Lambda$. Then almost finiteness of $\Gamma\cG$ implies that $\Lambda$ is amenable  and therefore $\Gamma\cG\simeq X\rtimes \Lambda$ has to be topologically amenable. But this is a contradiction. Furthermore, since $\cG$ is principal, Proposition \ref{from G to TG} shows that $\Gamma\cG$ is principal as well. Finally, it follows from Theorems \ref{non alomst finiteness},  \ref{non-p.i.} and Proposition \ref{from G to TGxGamma} that $\Gamma\cG\rtimes \Gamma$ is effective and is neither almost finite nor purely infinite.
\end{proof}

\begin{ex}\label{ex: TG non transformation}
    Let $\cG_0$ be the minimal principal second countable ample almost finite \'{e}tale \textit{geometric groupoid} constructed in \cite{El18} that is not topologically amenable and $\Gamma$ a non-amenable group.  Then by Theorem \ref{thm: non trans}, the groupoid $\Gamma\cG_0$ is not isomorphic to any transformation groupoid and  $\Gamma\cG_0\rtimes \Gamma$ is a minimal effective second countable ample \'{e}tale groupoid that is neither almost finite nor purely infinite.
\end{ex}

We end the paper with the following remark.

\begin{rem}\label{rmk: final remark}
    We note that all groupoids constructed in Theorem \ref{main:thmb}—and therefore, in particular, those in Examples \ref{ex:non a.f. and non p.i.} and \ref{ex: TG non transformation}—are not transformation groupoids \textit{themselves}, as they are of the form $\Gamma\cG\rtimes \Gamma$ with $\Gamma\cG\neq(\Gamma\cG)^{(0)}$. However, some of them are isomorphic to transformation groupoids. For instance, let $\Gamma$ be a non-amenable group and let $\cE$ be a UHF groupoid such that $C^*_r(\cE)\simeq \bigotimes_{n=1}^\infty M_{k_n}(\mathbb{C})$ (for example, $C^*_r(\cE_2)\simeq M_{2^\infty}(\mathbb{C})$). Then $\cE$ is isomorphic to the transformation groupoid $X\rtimes \Lambda$ arising from the natural action of $\Lambda=\bigoplus_{n=1}^{k_n}\bZ/k_n\bZ$ on the Cantor set $X=\prod_{n=1}^\infty \bZ/k_n\bZ$ by \cite{Kri80} (see also \cite[Theorem 4.10]{Matui2012}). It follows that $\Gamma\cE\rtimes\Gamma \simeq X^\Gamma\rtimes (\Lambda\wr_\Gamma \Gamma)$.

It remains unknown to the authors, however, whether $\Gamma\cG\rtimes \Gamma$ fail to be isomorphic to a transformation groupoid when $\cG$ itself fails. This situation may occur even when $\cE$ is an approximately finite groupoid that is not UHF, such as certain approximately finite groupoids from \cite[Corollary 3.7]{Bl80}, mentioned in Example \ref{ex:non a.f. and non p.i.} above.  See also \cite[Theorem 4.2,4.3]{Dahl08} for a characterization of when minimal approximately finite topological groupoids are isomorphic to transformation groupoids.  On the other hand, in Example \ref{ex: TG non transformation}, we already know that $\Gamma\cG_0$ is not isomorphic to any transformation groupoid. Nevertheless, whether the same holds for $\Gamma\cG_0\rtimes \Gamma$ remains unresolved.
    
In fact, determining whether a given (effective) groupoid is isomorphic to a transformation groupoid is generally a very difficult problem. From the $C^*$-algebraic perspective, this question is also related to whether the Jiang--Su algebra $\mathcal{Z}$ can be realized as a crossed product $C^*$-algebra---a well-known open problem in the structure theory of $C^*$-algebras. We refer the reader to \cite{MW24} for recent progress on this question.
\end{rem}

\bibliographystyle{alpha}
\bibliography{references.bib}

@article {Kri80,
    AUTHOR = {Krieger, Wolfgang},
     TITLE = {On a dimension  for a class of homeomorphism groups},
   JOURNAL = {Math. Ann.},
  FJOURNAL = {Mathematische Annalen},
    VOLUME = {252},
      YEAR = {1980},
    NUMBER = {},
     PAGES = {87-95}
}

@article {Bl80,
    AUTHOR = {Blackdar, Bruce},
     TITLE = {Traces on Simple AF $C^*$-algebras},
   JOURNAL = {J. Funct. Anal.},
  FJOURNAL = {Journal of functional analysis},
    VOLUME = {38},
      YEAR = {1980},
    NUMBER = {},
     PAGES = {156-168}
}

@article {BelkZaremsky22,
    AUTHOR = {Belk, James and Zaremsky, Matthew C. B.},
     TITLE = {Twisted {B}rin-{T}hompson groups},
   JOURNAL = {Geom. Topol.},
  FJOURNAL = {Geometry \& Topology},
    VOLUME = {26},
      YEAR = {2022},
    NUMBER = {3},
     PAGES = {1189--1223},
      ISSN = {1465-3060},
   MRCLASS = {20F65 (20E32 57M07)},
  MRNUMBER = {4466647},
MRREVIEWER = {Robert W. Bell},
       DOI = {10.2140/gt.2022.26.1189},
       URL = {https://doi.org/10.2140/gt.2022.26.1189},
}

@article {Ma22,
    AUTHOR = {Ma, Xin},
     TITLE = {Purely infinite locally compact Hausdorff \'{e}tale groupoids and their $C^*$-algebras},
   JOURNAL = {Int. Math. Math. Not.(IMRN)},
  FJOURNAL = {International Mathematics Research Notices},
    VOLUME = {2022},
      YEAR = {2022},
    NUMBER = {11},
     PAGES = {8420--8471}
}

@article {El21,
    AUTHOR = {Elek, Gabor},
     TITLE = {Free minimal actions of countable groups with invariant probability measures},
   JOURNAL = {Ergod. Th. \& Dynam. Sys.},
  FJOURNAL = {Ergodic Theory and Dynamical Systems},
    VOLUME = {41},
      YEAR = {2021},
    NUMBER = {},
     PAGES = {1369--1389}
}

@article {El18,
    AUTHOR = {Elek, Gabor},
     TITLE = {Qualitative graph limit theory. Cantor Dynamical Systems and Constant-Time Distributed Algorithms},
   JOURNAL = {arXiv: 1812.07511},
  FJOURNAL = {},
    VOLUME = {},
      YEAR = {},
    NUMBER = {},
     PAGES = {}
}

@article {MW20,
    AUTHOR = {Ma, Xin and Wu, Jianchao},
     TITLE = {Almost elementariness and fiberwise amenability for \'{e}tale groupoids},
   JOURNAL = {arXiv: 2011.01182},
  FJOURNAL = {},
    VOLUME = {},
      YEAR = {2020},
    NUMBER = {},
     PAGES = {}
}

@article {MW24,
    AUTHOR = {Ma, Xin and Wu, Jianchao},
     TITLE = {Almost elementary groupoid models for $C^*$-algebras},
   JOURNAL = {arXiv: 2407.05251},
  FJOURNAL = {},
    VOLUME = {},
      YEAR = {2024},
    NUMBER = {},
     PAGES = {}
}

@article {LZ24,
    AUTHOR = {Li, Kang and Zhang, Jiawen},
     TITLE = {Tracial states on groupoid $C^*$-algebras and essential freeness},
   JOURNAL = {To appear in J. Noncommut. Geom. arXiv: 2401.15546},
  FJOURNAL = {},
    VOLUME = {},
      YEAR = {2024},
    NUMBER = {},
     PAGES = {}
}

@article {Matui2006,
    AUTHOR = {Matui, Hiroki},
     TITLE = {Some remarks on topological full groups of {C}antor minimal
              systems},
   JOURNAL = {Internat. J. Math.},
  FJOURNAL = {International Journal of Mathematics},
    VOLUME = {17},
      YEAR = {2006},
    NUMBER = {2},
     PAGES = {231--251}
}

@article {Matui2012,
    AUTHOR = {Matui, Hiroki},
     TITLE = {Homology and topological full groups of \'{e}tale groupoids on
              totally disconnected spaces},
   JOURNAL = {Proc. Lond. Math. Soc. (3)},
  FJOURNAL = {Proceedings of the London Mathematical Society. Third Series},
    VOLUME = {104},
      YEAR = {2012},
    NUMBER = {1},
     PAGES = {27--56},
      ISSN = {0024-6115},
   MRCLASS = {19D55 (22A22 46L55)},
  MRNUMBER = {2876963},
MRREVIEWER = {Hiroyuki Osaka},
       DOI = {10.1112/plms/pdr029},
       URL = {https://doi.org/10.1112/plms/pdr029},
}

@incollection {Matui15,
    AUTHOR = {Matui, Hiroki},
     TITLE = {Topological full groups of \'{e}tale groupoids},
 BOOKTITLE = {Operator algebras and applications---the {A}bel {S}ymposium
              2015},
    SERIES = {Abel Symp.},
    VOLUME = {12},
     PAGES = {203--230},
 PUBLISHER = {Springer},
      YEAR = {2017},
   MRCLASS = {22A10 (20F99 43A10)},
  MRNUMBER = {3837599},
MRREVIEWER = {H\"{u}seyin \c{C}akalli},
}

@article {Matui2016,
    AUTHOR = {Matui, Hiroki},
     TITLE = {\'{E}tale groupoids arising from products of shifts of finite
              type},
   JOURNAL = {Adv. Math.},
  FJOURNAL = {Advances in Mathematics},
    VOLUME = {303},
      YEAR = {2016},
     PAGES = {502--548},
      ISSN = {0001-8708},
   MRCLASS = {19D55 (22A22 37B10 55N15)},
  MRNUMBER = {3552533},
       DOI = {10.1016/j.aim.2016.08.023},
       URL = {https://doi.org/10.1016/j.aim.2016.08.023},
}

@unpublished{Matui25,
    AUTHOR = {Matui, Hiroki},
    title={Introduction to topological full groups},
    YEAR = {2025},
    note={avaible at \url{https://www.math.s.chiba-u.ac.jp/~matui/2506Belfast.pdf}},
}

@unpublished{Matui25sh,
    AUTHOR = {Matui, Hiroki},
    title={Introduction to topological full groups},
    YEAR = {2025},
    note={avaible at \url{https://www.math.s.chiba-u.ac.jp/~matui/2509Shanghai.pdf}},
}

@book {Ceccherini-SCoornaert23,
    AUTHOR = {Ceccherini-Silberstein, Tullio and Coornaert, Michel},
     TITLE = {Cellular automata and groups},
    SERIES = {Springer Monographs in Mathematics},
      NOTE = {Second edition},
 PUBLISHER = {Springer, Cham},
      YEAR = {[2023] \copyright 2023},
     PAGES = {xxi+556},
      ISBN = {978-3-031-43327-6; 978-3-031-43328-3},
   MRCLASS = {37-02 (05C25 20-02 37B15 43A07 46L10 68Q80)},
  MRNUMBER = {4738386},
       DOI = {10.1007/978-3-031-43328-3},
       URL = {https://doi.org/10.1007/978-3-031-43328-3},
}

@article {Joseph24,
    AUTHOR = {Joseph, Matthieu},
     TITLE = {Amenable wreath products with non almost finite actions of
              mean dimension zero},
   JOURNAL = {Trans. Amer. Math. Soc.},
  FJOURNAL = {Transactions of the American Mathematical Society},
    VOLUME = {377},
      YEAR = {2024},
    NUMBER = {2},
     PAGES = {1321--1333},
      ISSN = {0002-9947},
   MRCLASS = {37A15 (20E08 20E15 20E26 37B05)},
  MRNUMBER = {4688551},
MRREVIEWER = {Sanghoon Kwon},
       DOI = {10.1090/tran/9061},
       URL = {https://doi.org/10.1090/tran/9061},
}

@article {Li2024,
    AUTHOR = {Li, Xin},
     TITLE = {Ample groupoids, topological full groups, algebraic {K}-theory
              spectra and infinite loop spaces},
   JOURNAL = {Forum Math. Pi},
  FJOURNAL = {Forum of Mathematics. Pi},
    VOLUME = {13},
      YEAR = {2025},
     PAGES = {Paper No. e9, 56},
   MRCLASS = {20J05 (19D23 22A22 46L05 57M07)},
  MRNUMBER = {4866416},
       DOI = {10.1017/fmp.2024.31},
       URL = {https://doi.org/10.1017/fmp.2024.31},
}

@unpublished{HirshbergWu24,
    title={Long thin covers and nuclear dimension},
    author={ Hirshberg, Ilan and Wu,jianchao},
   
    note={arXiv:\href{https://arxiv.org/abs/2308.12504}{2308.12504}},
}

@unpublished{PalmerWu25,
    title={Embedding groups into acyclic groups},
    author={Palmer, Martin and Wu,Xiaolei},
    note={ArXiv:\href{
https://doi.org/10.48550/arXiv.2510.16879}{2510.16879}},
}

@incollection {Ehresmann59,
    AUTHOR = {Ehresmann, Charles},
     TITLE = {Cat\'{e}gories topologiques et cat\'{e}gories diff\'{e}rentiables},
 BOOKTITLE = {Colloque {G}\'{e}om. {D}iff. {G}lobale ({B}ruxelles, 1958)},
     PAGES = {137--150},
 PUBLISHER = {Librairie Universitaire, Louvain},
      YEAR = {1959},
   MRCLASS = {57.00},
  MRNUMBER = {116360},
MRREVIEWER = {Shoshichi Kobayashi},
}

@book {Mackenzie87,
    AUTHOR = {Mackenzie, K.},
     TITLE = {Lie groupoids and {L}ie algebroids in differential geometry},
    SERIES = {London Mathematical Society Lecture Note Series},
    VOLUME = {124},
 PUBLISHER = {Cambridge University Press, Cambridge},
      YEAR = {1987},
     PAGES = {xvi+327},
      ISBN = {0-521-34882-X},
   MRCLASS = {58H05 (17B56 20L15 22E60 53C05 55R15)},
  MRNUMBER = {896907},
MRREVIEWER = {Jean Pradines},
       DOI = {10.1017/CBO9780511661839},
       URL = {https://doi.org/10.1017/CBO9780511661839},
}

@incollection {Haefliger91,
    AUTHOR = {Haefliger, Andr\'e},
     TITLE = {Complexes of groups and orbihedra},
 BOOKTITLE = {Group theory from a geometrical viewpoint ({T}rieste, 1990)},
     PAGES = {504--540},
 PUBLISHER = {World Sci. Publ., River Edge, NJ},
      YEAR = {1991},
      ISBN = {981-02-0442-6},
   MRCLASS = {20F32 (57M07)},
  MRNUMBER = {1170375},
MRREVIEWER = {D.\ E.\ Cohen},
}

@article {GiordanoPutnamSkau99,
    AUTHOR = {Giordano, Thierry and Putnam, Ian F. and Skau, Christian F.},
     TITLE = {Full groups of {C}antor minimal systems},
   JOURNAL = {Israel J. Math.},
  FJOURNAL = {Israel Journal of Mathematics},
    VOLUME = {111},
      YEAR = {1999},
     PAGES = {285--320},
      ISSN = {0021-2172,1565-8511},
   MRCLASS = {46L55 (37B05 46L80 54H20)},
  MRNUMBER = {1710743},
MRREVIEWER = {A.\ I.\ Danilenko},
       DOI = {10.1007/BF02810689},
       URL = {https://doi.org/10.1007/BF02810689},
}

@article {Sim17,
    AUTHOR = {Sims, Aidan},
     TITLE = {Hausdorff \'{e}tale groupoids and their $C^*$-algebras},
   JOURNAL = {arXiv: 1710.10897},
  FJOURNAL = {},
    VOLUME = {},
      YEAR = {2017},
    NUMBER = {},
     PAGES = {}
}

@book {Renault80,
    AUTHOR = {Renault, Jean},
     TITLE = {A groupoid approach to {$C\sp{\ast} $}-algebras},
    SERIES = {Lecture Notes in Mathematics},
    VOLUME = {793},
 PUBLISHER = {Springer, Berlin},
      YEAR = {1980},
     PAGES = {ii+160},
      ISBN = {3-540-09977-8},
   MRCLASS = {46Lxx (22D25 22D40)},
  MRNUMBER = {584266},
MRREVIEWER = {A.\ K.\ Seda},
}

@unpublished {WuZhao25+,
   AUTHOR = {Xiaolei Wu and Mengfei Zhao},
    TITLE = {Twisted topological groupoids and their full groups},
     NOTE = {In preparation},
}

@article {JuschenkoMonod13,
    AUTHOR = {Juschenko, Kate and Monod, Nicolas},
     TITLE = {Cantor systems, piecewise translations and simple amenable
              groups},
   JOURNAL = {Ann. of Math. (2)},
  FJOURNAL = {Annals of Mathematics. Second Series},
    VOLUME = {178},
      YEAR = {2013},
    NUMBER = {2},
     PAGES = {775--787},
      ISSN = {0003-486X,1939-8980},
   MRCLASS = {43A07 (20E32 54H20)},
  MRNUMBER = {3071509},
MRREVIEWER = {Jorge\ Galindo},
       DOI = {10.4007/annals.2013.178.2.7},
       URL = {https://doi.org/10.4007/annals.2013.178.2.7},
}

@book {Dahl08,
    AUTHOR = {Dahl, Heidi},
     TITLE = {Cantor minimal systems and AF-equivalence relations},
      NOTE = {Thesis (Ph.D.)--
Norwegian University of Science and Technology},

      YEAR = {2008},
     PAGES = {57},
     ISBN = {978-82-471-7023-6},
       URL =
              {https://nva.sikt.no/registration/0198cc52b169-fa24635c-04a5-4601-80b5-0609f3f85b3b},
}

@article {Matui2015b,
    AUTHOR = {Matui, Hiroki},
     TITLE = {Topological full groups of one-sided shifts of finite type},
   JOURNAL = {J. Reine Angew. Math.},
  FJOURNAL = {Journal f\"ur die Reine und Angewandte Mathematik. [Crelle's
              Journal]},
    VOLUME = {705},
      YEAR = {2015},
     PAGES = {35--84},
      ISSN = {0075-4102,1435-5345},
   MRCLASS = {22A22 (37B10)},
  MRNUMBER = {3377390},
MRREVIEWER = {Thomas\ Ward},
       DOI = {10.1515/crelle-2013-0041},
       URL = {https://doi.org/10.1515/crelle-2013-0041},
}

\end{document}